\documentclass[11pt]{amsart}

\usepackage{amscd}
\usepackage{amsmath, amssymb}
\usepackage{amsfonts}
\newcommand{\de}{\partial}
\newcommand{\db}{\overline{\partial}}

\newcommand{\ddt}{\frac{\partial}{\partial t}}

\newcommand{\ddbar}{\sqrt{-1} \partial \overline{\partial}}

\newcommand{\ov}[1]{\overline{#1}}
\newcommand{\mn}{\sqrt{-1}}

\newcommand{\tr}[2]{\mathrm{tr}_{#1}{#2}}
\newcommand{\ti}[1]{\tilde{#1}}
\newcommand{\vp}{\varphi}

\newcommand{\ve}{\varepsilon}
\newcommand{\of}{\omega_{\mathrm{LF}}}

\newcommand{\bd}{\begin{enumerate}}
\newcommand{\ed}{\end{enumerate}}
\newcommand{\btheorem}{\begin{theorem}}
\newcommand{\etheorem}{\end{theorem}}
\newcommand{\bproposition}{\begin{proposition}}
\newcommand{\eproposition}{\end{proposition}}
\newcommand{\bdefinition}{\begin{definition}}
\newcommand{\edefinition}{\end{definition}}
\newcommand{\bcorollary}{\begin{corollary}}
\newcommand{\ecorollary}{\end{corollary}}
\newcommand{\bproof}{\begin{proof}}
\newcommand{\eproof}{\end{proof}}
\newcommand{\bremark}{\begin{remark}}
\newcommand{\eremark}{\end{remark}}
\newcommand{\eexample}{\end{example}}
\newcommand{\bexample}{\begin{example}}
\newcommand{\elemma}{\end{lemma}}
\newcommand{\blemma}{\begin{lemma}}
\newcommand{\ee}{\end{eqnarray*}}
\newcommand{\be}{\begin{eqnarray*}}

\newcommand{\glf}{g_{\mathrm{LF}}}

\newcommand{\Tlf}{T_{\mathrm{LF}}}

\numberwithin{equation}{section}
\renewcommand{\leq}{\leqslant}

\renewcommand{\le}{\leqslant}
\renewcommand{\ge}{\geqslant}

\begin{document}
\newtheorem{claim}{Claim}
\newtheorem{theorem}{Theorem}[section]
\newtheorem{lemma}[theorem]{Lemma}
\newtheorem{corollary}[theorem]{Corollary}
\newtheorem{proposition}[theorem]{Proposition}
\newtheorem{question}{question}[section]
\newtheorem{conjecture}[theorem]{Conjecture}

\theoremstyle{definition}
\newtheorem{remark}[theorem]{Remark}

\title{Inoue surfaces and the Chern-Ricci flow}
\author[S. Fang]{Shouwen Fang}
\address{College of Mathematical Sciences, Yangzhou University, Yangzhou 225002, P.R.China}
\author[V. Tosatti]{Valentino Tosatti}
\thanks{Supported in part by National Science Foundation of China grants 11401514 (S.F) and 11401023 (T.Z.), and  National Science Foundation grants DMS-1308988 (V.T.) and DMS-1406164 (B.W). The second-named author is supported in part by a Sloan Research Fellowship.}
\address{Department of Mathematics, Northwestern University, 2033 Sheridan Road, Evanston, IL 60208}
\author[B. Weinkove]{Ben Weinkove}
\address{Department of Mathematics, Northwestern University, 2033 Sheridan Road, Evanston, IL 60208}
\author[T. Zheng]{Tao Zheng}
\address{School of Mathematics and Statistics, Beijing Institute of Technology, Beijing 100081, P.R.China}
\begin{abstract} We investigate the Chern-Ricci flow, an evolution equation of Hermitian metrics, on Inoue surfaces.  These are  non-K\"ahler compact complex surfaces of type Class VII.  We show that, after an initial conformal change, the flow always collapses the Inoue surface to a circle at infinite time, in the sense of Gromov-Hausdorff.
\end{abstract}

\maketitle

\section{Introduction}

The Chern-Ricci flow is an evolution equation for Hermitian metrics on complex manifolds, which specializes to the K\"ahler-Ricci flow when the initial metric is K\"ahler.  It was  introduced by Gill \cite{G} in the setting of manifolds with vanishing first Bott-Chern class.  The second and third named authors  \cite{TW, TW2} investigated the flow on more general complex manifolds and  initiated a program to understand its behavior on all compact complex surfaces.  The results obtained in \cite{TW, TW2, TWY, G3, GS, Ni, ShW} are closely analogous to those for the K\"ahler-Ricci flow, and provide compelling evidence that the Chern-Ricci flow is a natural geometric flow on complex surfaces  whose behavior reflects the underlying geometry of these manifolds.

The Class VII surfaces are of particular interest, and indeed there is a well-known open problem to complete their classification.   Class VII surfaces are by definition compact complex surfaces with negative Kodaira dimension and first Betti number one.  In particular, they are non-K\"ahler.  It is natural to try to understand the behavior of the Chern-Ricci flow on these surfaces, with the long-term aim of extracting new topological or complex-geometric information (cf. \cite{STi}, where a different flow is considered).

The goal of this paper is to analyze the behavior of the Chern-Ricci flow on a family of
 already well-understood Class VII surfaces, known as Inoue surfaces.  Thanks to results in \cite{Bo, Ko, LYZ, T0}, we can characterize an Inoue surface $S$ as a Class VII surface with vanishing second Betti number and no holomorphic curves.   These surfaces were constructed and classified by Inoue in \cite{In}.  They come in three families, denoted by $S_M, S^+$ and $S^-$, and their universal cover is $\mathbb{C}\times H$, where $H$ is the upper half plane.
We will denote by $(z_1,z_2)$ the standard coordinates on $\mathbb{C}\times H$.  On any Inoue surface $S$ the standard Poincar\'e metric $\frac{\mn}{(\mathrm{Im}z_2)^2}dz_2\wedge d\ov{z}_2$ on $H$ descends to a closed semipositive real $(1,1)$ form on $S$, and a constant multiple of it (this constant depends on $S$) is then a closed real  $(1,1)$ form $\omega_{\infty}$ with
$$0 \le \omega_{\infty} \in - c_1^{\mathrm{BC}}(S),$$
where $c_1^{\mathrm{BC}}(S)$ is the first Bott-Chern class of $S$.
The $(1,1)$ form $\omega_{\infty}$ will play an important role in our results.

We consider the Chern-Ricci flow
\begin{equation} \label{NCRF}
\ddt{} \omega = - \textrm{Ric}(\omega) - \omega, \qquad \omega|_{t=0} = \omega_0,
\end{equation}
on $S$, starting at a Hermitian metric $\omega_0$.   Here $\textrm{Ric}(\omega)$ denotes the \emph{Chern-Ricci} form of the Hermitian metric $\omega = \sqrt{-1} g_{i\ov{j}}dz_i \wedge d\ov{z}_j$, defined by
$$\textrm{Ric}(\omega) = - \ddbar \log \det g.$$
 To be precise, (\ref{NCRF}) is the \emph{normalized} Chern-Ricci flow, which will give us a limit at infinity without the need to rescale.
Since the canonical bundle of $S$ is nef, it follows from the results of \cite{TW, TW2, TWY} that there exists a unique solution to (\ref{NCRF}) for all time. We are concerned with the behavior of the flow as $t \rightarrow \infty$.

In \cite{TW2}, an explicit solution to (\ref{NCRF}) was found on every Inoue surface. In fact, viewing Inoue surfaces as quotients $G/H$ where $G$ is a solvable Lie group \cite{Wa}, the metrics considered in \cite{TW2} are homogeneous, and it was later observed in \cite{L,LR} that the Chern-Ricci form of any homogeneous Hermitian metric depends only on the complex structure of the manifold (and not on the metric), which allows one to explicitly solve the Chern-Ricci flow starting at any homogeneous metric on an Inoue surface.

The explicit solutions constructed in \cite{TW2} (and in fact all the homogeneous solutions considered in \cite{L,LR}) have the property that they converge in the Gromov-Hausdorff sense to the circle $S^1$ with its standard metric. This reflects the structure of Inoue surfaces as bundles over $S^1$.  Note that this collapsing to $S^1$ is in striking contrast with the behavior of the K\"ahler-Ricci flow, where collapsed limits are always even-dimensional (cf. \cite{FZ, G2, ST, ST2, ST3,SW, TWY2}).

It is natural to conjecture that the limiting behavior of the explicit solutions \cite{TW2} holds for \emph{any} choice of initial metric.  The main result of this paper is that this is true for a large class of initial metrics.

 \begin{theorem} \label{maintheorem1}
Let $S$ be an Inoue surface, and let $\omega$ be any Hermitian metric on $S$.  Then there exists a Hermitian metric $\of = e^{\sigma} \omega$ in the conformal class of $\omega$ such that the following holds.

 Let $\omega(t)$ be the solution of the normalized Chern-Ricci flow (\ref{NCRF}) starting at a Hermitian metric of the form
  $$\omega_0 = \of+ \ddbar \rho>0.$$
  Then as $t \rightarrow \infty$,
$$\omega(t)\to \omega_\infty,$$
uniformly on $S$ and exponentially fast, where $\omega_{\infty}$ is the $(1,1)$ form defined above.  Moreover,
$$(S, \omega(t)) \rightarrow (S^1, d),$$
 in the Gromov-Hausdorff sense, where $d$ is the standard metric on the circle $S^1$ (of radius depending on $S$).
\end{theorem}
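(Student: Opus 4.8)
The plan is to reduce the flow to a scalar parabolic complex Monge–Ampère equation, run a priori estimates adapted to the collapsing geometry to obtain $\omega(t)\to\omega_\infty$ exponentially, and then read off the Gromov–Hausdorff limit from the explicit structure of $S$. Since the canonical bundle of $S$ is nef, the all-time existence of $\omega(t)$ is already known, so the task is purely the behavior as $t\to\infty$.

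First I would fix the conformal factor. Because $\omega_\infty\in-c_1^{\mathrm{BC}}(S)$ while $[\Ric(\omega)]=c_1^{\mathrm{BC}}(S)$, the real $(1,1)$ form $\omega_\infty+\Ric(\omega)$ is trivial in Bott–Chern cohomology, hence equals $\ddbar h$ for a smooth real function $h$. Setting $\sigma=h/2$ and $\of=e^{\sigma}\omega$, the identity $\Ric(e^{\sigma}\omega)=\Ric(\omega)-2\ddbar\sigma$ (valid in complex dimension two) gives $\Ric(\of)=-\omega_\infty$, the ``good'' background metric. With the reference forms $\hat\omega_t=(1-e^{-t})\omega_\infty+e^{-t}\of$, which satisfy $\partial_t\hat\omega_t=\omega_\infty-\hat\omega_t$ and interpolate from $\of$ to $\omega_\infty$, one checks that $\omega(t)=\hat\omega_t+\ddbar\varphi$ solves \eqref{NCRF} exactly when
$$\partial_t\varphi=\log\frac{(\hat\omega_t+\ddbar\varphi)^2}{\of^{2}}-\varphi,\qquad \varphi|_{t=0}=\rho.$$
The crucial structural feature is that $\omega_\infty$ is the pullback of (a multiple of) the Poincar\'e form on $H$, so $\omega_\infty^{2}=0$ and $\hat\omega_t^{2}=2e^{-t}(1-e^{-t})\,\omega_\infty\wedge\of+e^{-2t}\of^{2}$ collapses at rate $e^{-t}$ (with $\omega_\infty\wedge\of>0$ pointwise, by positivity of the Poincar\'e form). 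Thus one expects $\varphi=-t+O(1)$ while $\ddbar\varphi\to0$, which is precisely the assertion $\omega(t)\to\omega_\infty$.

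Next I would prove the a priori estimates, uniformly in $t$. A zeroth order bound $|\varphi+t|\le C$ and $|\dot\varphi|\le C$ follow from the maximum principle applied to $\varphi+t$ and to $\dot\varphi+\varphi$. The heart of the matter is the second order estimate: since $\hat\omega_t$ degenerates there is no fixed positive background and no two-sided metric bound, so instead I would prove $\omega(t)\le C\,\omega_\infty+Ce^{-t}\of$. The upper bound in the non-collapsing directions comes from a parabolic Schwarz lemma, exploiting that $\omega_\infty$ is pulled back from the negatively curved Poincar\'e metric, so that $\tr{\omega(t)}{\omega_\infty}$ satisfies a favorable parabolic differential inequality; the bound $\omega(t)\le Ce^{-t}\of$ in the $z_1$ directions encodes the collapse. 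Feeding these back, the normalization term $-\varphi$ (equivalently the $-\omega$ in \eqref{NCRF}) furnishes a spectral shift that upgrades convergence to exponential, giving $\|\ddbar\varphi\|_{C^{0}}\le Ce^{-\delta t}$ and hence $\omega(t)\to\omega_\infty$ uniformly and exponentially fast. I expect this second order/Schwarz-lemma estimate in the collapsing regime to be the main obstacle, since $\Delta_{\omega(t)}$ degenerates in the collapsing directions and the higher order (Evans–Krylov type) machinery must be carried out with constants uniform as the volume tends to zero; here I would use the homogeneity of the Inoue structure, comparing with the explicit homogeneous solution of \cite{TW2} as a barrier, to keep the constants controlled.

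Finally I would identify the Gromov–Hausdorff limit. On the universal cover $\mathbb{C}\times H$ the function $\log(\mathrm{Im}\,z_2)$ is invariant under the translations $z_2\mapsto z_2+a_i$ and shifts by $\log\alpha$ under $z_2\mapsto\alpha z_2$, so $F=\log(\mathrm{Im}\,z_2)/\log\alpha$ descends to a fibration $F\colon S\to S^{1}=\mathbb{R}/\mathbb{Z}$ whose pushforward of $\omega_\infty$ is the flat circle of circumference $\log\alpha$ (up to the constant defining $\omega_\infty$), i.e.\ $(S^{1},d)$. It then remains to show the fibers collapse: a fiber is the three-torus $\mathbb{R}^{3}/\Lambda$ in the $(z_1,\mathrm{Re}\,z_2)$ directions, and in the metric $\omega(t)\to\omega_\infty$ the $z_1$ directions degenerate to zero while the projection of $\Lambda$ onto $\mathrm{Re}\,z_2$ is the \emph{dense} subgroup generated by the $a_i$. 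Hence any displacement in $\mathrm{Re}\,z_2$ can be approximated by lattice vectors whose $z_1$ components cost arbitrarily little once $\omega(t)$ is sufficiently collapsed, so the fiber diameters tend to $0$ uniformly over the compact base. Combining the uniform convergence $\omega(t)\to\omega_\infty$, the vanishing of fiber diameters, and the fact that $F$ recovers the Poincar\'e length along $\mathrm{Im}\,z_2$, a standard diameter/almost-submersion argument yields $(S,\omega(t))\to(S^{1},d)$ in the Gromov–Hausdorff sense; the families $S^{+}$ and $S^{-}$ are handled identically using their analogous defining data.
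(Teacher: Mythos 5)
Your overall architecture (reduction to a parabolic Monge--Amp\`ere equation with collapsing reference forms, maximum-principle and Schwarz-lemma estimates, then Gromov--Hausdorff identification via the circle fibration and the density of the leaves in the fibers) matches the paper's. But the step where you fix the conformal factor contains a genuine gap that the rest of the argument cannot repair. You choose $\sigma$ so that $\Ric(e^{\sigma}\omega)=-\omega_\infty$, i.e.\ you normalize the \emph{determinant} of $\of$. The paper instead needs a normalization of the $(1,1)$-\emph{component along the foliation}: for $S_M$ one takes $e^{\sigma}=\frac{\alpha\wedge\beta}{\alpha\wedge\omega}$, so that $\alpha\wedge\of=c\,\alpha\wedge\beta$ (``strongly flat along the leaves''), equivalently $(\glf)_{1\ov 1}=cy_2$ in the standard coordinates. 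These two normalizations are genuinely different, and the difference is exactly where your argument breaks: with your reference forms one has
$$e^{t}\hat\omega_t^{\,2}/\of^{2}\longrightarrow 2\,\frac{\omega_\infty\wedge\of}{\of^{2}}=\tr{\of}{\omega_\infty},$$
which is a \emph{non-constant} function unless $\of$ happens to satisfy an extra pointwise condition. Consequently the source term in your scalar equation tends to $-t+G$ with $G$ non-constant, so the best you can get is $\varphi+t\to\psi_\infty$ with $\psi_\infty$ non-constant, hence $\ddbar\varphi\not\to 0$ and $\omega(t)\to\omega_\infty+\ddbar\psi_\infty\neq\omega_\infty$. Your own sentence ``$\varphi=-t+O(1)$ while $\ddbar\varphi\to 0$'' asserts two things that are incompatible unless the $O(1)$ term is constant, and you give no mechanism forcing that. (In the paper, the strongly-flat condition makes $\Omega=2\alpha\wedge\of$ satisfy \emph{both} $\ddbar\log\Omega=\alpha$ and $e^{t}\tilde\omega^{2}/\Omega=1+O(e^{-t})$, and it is the second identity that yields $|\varphi|\le C(1+t)e^{-t}$ and hence convergence to $\omega_\infty$ itself. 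Whether an arbitrary Hermitian metric lies in the $\de\db$-class of a strongly-flat one is posed as an open question in Section 4, so your choice of $\sigma$ cannot be reduced to the paper's case either.)

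Two smaller points. First, even granting convergence to some limit $\omega_\infty+\ddbar\psi_\infty$, your Gromov--Hausdorff step would then be computing lengths with respect to the wrong form: one checks that $\psi_\infty$ would have to be a pullback from $S^1$, and then the limit circle generically has length strictly less than that of $(S^1,d)$, so the stated conclusion would actually fail for your normalization rather than merely being harder to prove. Second, the correct conformal change is much more elementary than solving a Bott--Chern potential equation: it is the pointwise algebraic formula $e^{\sigma}=\frac{\alpha\wedge\beta}{\alpha\wedge\omega}$ (and its analogue with $\gamma$ on $S^{+}$), whose justification (Lemma \ref{flatleaves}) uses only that bounded harmonic functions on $\mathbb{C}$ are constant and that leaves are dense in the $T^3$-fibers. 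Your closing Gromov--Hausdorff discussion is essentially the paper's argument and is fine once the correct limit form is in hand.
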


Thus we prove  that the Chern-Ricci flow collapses a Hermitian metric $\omega$ on the Inoue surface $S$ to a circle,
\emph{modulo an initial conformal change to $\omega$}.  In fact, we prove  more than this, since we allow our initial metric to be any metric in the $\partial \overline{\partial}$-class of $e^{\sigma}\omega$.


Our conformal change   is related to a holomorphic foliation structure on the Inoue surface $S$, which we now explain.
Every Inoue surface $S$ carries a holomorphic foliation $\mathcal{F}$ without singularities, whose leaves are the images of $\mathbb{C}\times \{z_2\}$, for every $z_2\in H$, under the quotient map $\mathbb{C}\times H \to S.$ In other words, the foliation $\mathcal{F}$ is defined by the subbundle of $TS$ which is the image of $T\mathbb{C}\oplus\{0\}$ under the differential of the quotient map. Every leaf of $\mathcal{F}$ is biholomorphic to $\mathbb{C}$ if $S$ is of type $S_M$, and biholomorphic to $\mathbb{C}^*$ if $S$ is of type $S^+$ or $S^-$ (see \cite{Br2}, for example). We will say that a Hermitian metric $\omega$ on $S$ is {\em flat along the leaves} if the restriction of $\omega$ to every leaf of $\mathcal{F}$ is a flat K\"ahler metric on $\mathbb{C}$ or $\mathbb{C}^*$. Equivalently, after pullback to the universal cover $\mathbb{C}\times H$, the metric restricted to every slice $\mathbb{C}\times \{z_2\}, z_2\in H,$ is flat. If $\omega$ has the further property that after pullback to $\mathbb{C}\times H$ its restriction to $\mathbb{C}\times \{z_2\}$ equals $c (\mathrm{Im}z_2) \mn dz_1\wedge d\ov{z}_1$ when $S$ is of type $S_M$ and
 $c \mn dz_1\wedge d\ov{z}_1$ when $S$ is of type $S^+$ or $S^-$, where $c>0$ is a constant, then we will say that $\omega$ is {\em strongly flat along the leaves}.  This is the property that we require of our metric $\of$ in the statement of Theorem \ref{maintheorem1}.

The point is that the assumption of being strongly flat along the leaves is not in fact very restrictive, since it can always be attained from any Hermitian metric by a conformal change:

\begin{proposition}\label{easy}
Given any Hermitian metric $\omega$ on an Inoue surface, there exists a smooth function $\sigma$ on $S$ such that $\of := e^{\sigma} \omega$ is strongly flat along the leaves.
\end{proposition}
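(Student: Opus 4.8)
The plan is to pass to the universal cover and write $\sigma$ down explicitly, since the strongly flat requirement leaves essentially no freedom. Let $\pi\colon \mathbb{C}\times H\to S$ be the quotient map and write $\pi^*\omega=\mn g_{i\ov{j}}\,dz_i\wedge d\ov{z}_j$, where the coefficients $g_{i\ov{j}}$ are smooth and $\pi^*\omega$ is invariant under the deck group $\Gamma$. Restricting to a slice $\mathbb{C}\times\{z_2\}$ kills $dz_2$, so $\pi^*\omega$ restricts there to $\mn g_{1\ov{1}}\,dz_1\wedge d\ov{z}_1$; hence demanding that $\of=e^{\sigma}\omega$ be strongly flat along the leaves forces
$$ e^{\sigma}g_{1\ov{1}}=\mathrm{Im}\,z_2 \ \ (\text{type } S_M), \qquad e^{\sigma}g_{1\ov{1}}=1 \ \ (\text{type } S^{\pm}), $$
after normalizing the constant to $1$. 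This determines $\sigma$ uniquely: $\sigma=\log(\mathrm{Im}\,z_2)-\log g_{1\ov{1}}$ in the first case and $\sigma=-\log g_{1\ov{1}}$ in the second. Since $g_{1\ov{1}}>0$ and $\mathrm{Im}\,z_2>0$, this $\sigma$ is automatically smooth, so the entire content of the proposition is the claim that $\sigma$ descends to $S$, i.e. that it is $\Gamma$-invariant.

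To check invariance I would read off, for each generator $\gamma$ of $\Gamma$, how $g_{1\ov{1}}$ transforms, using only that $\pi^*\omega$ is $\Gamma$-invariant: if $\gamma$ acts on the first coordinate by $z_1\mapsto \epsilon z_1+(\text{const})$ then $\gamma^*dz_1=\epsilon\,dz_1$ and matching the $dz_1\wedge d\ov{z}_1$ components gives $g_{1\ov{1}}(\gamma z)=|\epsilon|^{-2}g_{1\ov{1}}(z)$. For the translational generators $(z_1,z_2)\mapsto(z_1+b_i,z_2+a_i)$ with $a_i$ real one has $\epsilon=1$ and $\mathrm{Im}(z_2+a_i)=\mathrm{Im}\,z_2$, so both $g_{1\ov{1}}$ and $\mathrm{Im}\,z_2$ are individually invariant and $\sigma$ is unchanged. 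This already disposes of the families $S^+$ and $S^-$ completely: recalling Inoue's explicit generators, every generator of $\Gamma$ acts on $z_1$ by a map of modulus-one linear part ($\epsilon=1$ for $S^+$, and $\epsilon\in\{\pm1\}$ for $S^-$), so $g_{1\ov{1}}$ is $\Gamma$-invariant and $\sigma=-\log g_{1\ov{1}}$ descends at once.

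The one substantive check—and the crux of the argument—is the hyperbolic generator $g_0(z_1,z_2)=(\beta z_1,\alpha z_2)$ of the family $S_M$, where $\alpha>1$ and $\beta,\ov{\beta}$ are the eigenvalues of the defining matrix $M\in SL(3,\mathbb{Z})$. Here $\gamma^*dz_1=\beta\,dz_1$ gives $g_{1\ov{1}}(g_0 z)=|\beta|^{-2}g_{1\ov{1}}(z)$, contributing $+\log|\beta|^2$ to $\sigma(g_0 z)-\sigma(z)$, while $\mathrm{Im}(\alpha z_2)=\alpha\,\mathrm{Im}\,z_2$ contributes $+\log\alpha$, so that
$$ \sigma(g_0 z)-\sigma(z)=\log\alpha+\log|\beta|^2=\log(\alpha|\beta|^2)=\log\det M=0, $$
the last equality because $\det M=\alpha\beta\ov{\beta}=\alpha|\beta|^2=1$. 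This is exactly the role of the factor $\mathrm{Im}\,z_2$ in the normalization for $S_M$: it is present precisely to absorb the complex scaling of $z_1$ under $g_0$. With invariance verified on every generator, $\sigma$ descends to a smooth function on $S$, and by construction $\of=e^{\sigma}\omega$ restricts to $(\mathrm{Im}\,z_2)\,\mn\,dz_1\wedge d\ov{z}_1$ (resp. $\mn\,dz_1\wedge d\ov{z}_1$) on each leaf, i.e. it is strongly flat along the leaves. The only real obstacle is thus bookkeeping—having the explicit $\Gamma$-action for each family in hand—together with the eigenvalue identity $\alpha|\beta|^2=1$; no PDE or analysis enters.
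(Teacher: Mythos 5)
Your proof is correct, and in fact your $\sigma$ coincides exactly with the one in the paper: for $S_M$ the paper sets $e^{\sigma}=\frac{\alpha\wedge\beta}{\alpha\wedge\omega}$, which in the coordinates $(z_1,z_2)$ is precisely $\frac{y_2}{g_{1\ov{1}}}$, i.e.\ your formula $\sigma=\log(\mathrm{Im}\,z_2)-\log g_{1\ov{1}}$ (and similarly for $S^{\pm}$ with $\alpha'$ and $\gamma$ in place of $\alpha$ and $\beta$). The difference is purely in how well-definedness on $S$ is verified: the paper writes $e^{\sigma}$ as a ratio of wedge products of $(1,1)$-forms that are already known to descend to $S$, so $\Gamma$-invariance is automatic and no generators need to be examined; you instead work on $\mathbb{C}\times H$ and check invariance generator by generator, with the hyperbolic generator $f_0(z_1,z_2)=(\mu z_1,\lambda z_2)$ handled by the identity $\lambda|\mu|^2=\det M=1$. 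Your route is more hands-on but has the merit of making visible \emph{why} the factor $\mathrm{Im}\,z_2$ must appear in the $S_M$ normalization (it absorbs the scaling of $dz_1$ under $f_0$), something the paper's formulation leaves implicit. Two small points of bookkeeping: your $\alpha,\beta$ for the eigenvalues of $M$ clash with the paper's use of $\alpha,\beta$ for the reference $(1,1)$-forms (the paper calls the eigenvalues $\lambda,\mu$); and for $S^+$ the generators $f_j$ send $z_1\mapsto z_1+b_jz_2+c_j$, so the additive term is not constant --- but since only the linear part in $z_1$ enters the $dz_1\wedge d\ov{z}_1$ coefficient, your conclusion that $g_{1\ov{1}}$ is invariant still holds. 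Note also that the paper's Lemma \ref{flatleaves} proves more than you need here (a Liouville-type characterization of metrics that are merely flat along the leaves); for the Proposition itself only the easy direction is used, exactly as in your argument.
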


There is a well-developed theory of simultaneous uniformization of leaves of holomorphic foliations by complex curves (see e.g. \cite{Br, Ca, Gh}), but it turns out that in our explicit situation the proof of this proposition becomes very simple.
As far as we know, it may well be the case that every Hermitian metric on an Inoue surface belongs to the $\de\db$-class of a Hermitian metric which is strongly flat along the leaves (see section \ref{sectionconj}, Question 1).

An interesting question is whether the uniform ($C^0$) convergence in Theorem \ref{maintheorem1} of $\omega(t)$ to $\omega_{\infty}$ can be strengthened to smooth $(C^{\infty})$ convergence.  As a partial result in this direction, we
 show that if the initial metric is of a more restricted type, then we can obtain convergence in $C^{\alpha}$ for $0< \alpha <1$. More precisely, Tricerri \cite{Tr} and Vaisman \cite{Va} constructed an explicit, homogeneous, Gauduchon metric $\omega_{\mathrm{TV}}$ on each Inoue surface, which is strongly flat along the leaves.  We consider initial metrics in the $\partial\ov{\partial}$-class of $\omega_{\mathrm{TV}}$ and prove:
 \begin{theorem} \label{mainprop}
Let $S$ be an Inoue surface, and let $\omega(t)$ be the solution of the normalized Chern-Ricci flow (\ref{NCRF}) starting at a Hermitian metric of the form
  $$\omega_0 = \omega_{\mathrm{TV}}+ \ddbar \rho>0.$$
  Then the metrics $\omega(t)$ are uniformly bounded in the $C^1$ topology, and as $t \rightarrow \infty$,
$$\omega(t)\to \omega_\infty,$$
in the $C^\alpha$ topology, for every $0<\alpha<1$.
\end{theorem}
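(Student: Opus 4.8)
The plan is to deduce the $C^\alpha$ convergence from Theorem \ref{maintheorem1} by interpolation, once a uniform $C^1$ bound is available. Since $\omega_{\mathrm{TV}}$ is strongly flat along the leaves, Theorem \ref{maintheorem1} applies directly and already gives that $\omega(t)\to\omega_\infty$ in $C^0$, uniformly and exponentially fast. Measuring all tensor norms against a fixed smooth background metric, the interpolation inequality
$$\|\omega(t)-\omega_\infty\|_{C^\alpha} \le C\,\|\omega(t)-\omega_\infty\|_{C^0}^{1-\alpha}\,\|\omega(t)-\omega_\infty\|_{C^1}^{\alpha},\qquad 0<\alpha<1,$$
then shows that a uniform bound on $\|\omega(t)\|_{C^1}$, together with the exponential $C^0$ decay, forces $\omega(t)\to\omega_\infty$ in $C^\alpha$. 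Hence the whole content of the theorem is the uniform $C^1$ estimate, and it is precisely here that the hypothesis $\omega_0=\omega_{\mathrm{TV}}+\ddbar\rho$ --- rather than a general metric that is merely strongly flat along the leaves --- will be used.

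To carry out the estimate I would first reduce the flow to a scalar parabolic complex Monge--Amp\`ere equation. Fixing a smooth volume form $\Omega$ with $-\Ric(\Omega)=\omega_\infty$, where $\Ric(\Omega)=-\ddbar\log\Omega$, and taking the reference path $\hat\omega_t=e^{-t}\omega_{\mathrm{TV}}+(1-e^{-t})\omega_\infty$, one writes $\omega(t)=\hat\omega_t+\ddbar\varphi$ and obtains
$$\frac{\partial\varphi}{\partial t}=\log\frac{(\hat\omega_t+\ddbar\varphi)^2}{\Omega}-\varphi.$$
Uniform bounds, with exponential decay, on $\varphi$ and $\dot\varphi$ follow from the maximum principle together with the input of Theorem \ref{maintheorem1}. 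The heart of the matter is then a uniform second-order estimate bounding the metric $g_{i\ov j}(t)$, followed by a Calabi-type third-order estimate bounding its first derivatives. Here I would exploit the special structure of $\omega_{\mathrm{TV}}$: being homogeneous, Gauduchon and strongly flat along the leaves, on the universal cover $\mathbb{C}\times H$ it takes an explicit block form adapted to the foliation, with the leaf direction flat and the transverse direction a multiple of $\omega_\infty$. In a frame adapted to this splitting the background torsion is explicit, and the Gauduchon condition controls exactly the non-K\"ahler torsion terms that obstruct the usual Hermitian parabolic estimates; one can then run a maximum-principle argument for a trace-type quantity, using flatness along the leaves and the exponential collapsing of the fiber to absorb the error terms, and conclude the desired uniform $C^1$ bound on $g_{i\ov j}(t)$.

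The main obstacle is exactly this uniform $C^1$ estimate in the presence of collapsing. Because $\hat\omega_t\to\omega_\infty$ and $\omega_\infty$ is degenerate along the leaves, the reference metrics degenerate as $t\to\infty$, so the standard second- and higher-order a priori estimates for the parabolic Monge--Amp\`ere equation blow up in the fiber directions; this is exactly why one should not expect $C^\infty$, or even $C^2$, convergence from these methods, and why the conclusion is limited to $C^\alpha$. What makes the $C^1$ bound attainable is the homogeneity and the Gauduchon property of $\omega_{\mathrm{TV}}$, which replace the intractable general estimates by explicit computations in adapted coordinates, separating the collapsing fiber direction (handled by the exponential decay already established) from the transverse direction. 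Once the uniform $C^1$ bound is in place, the interpolation inequality above yields $C^\alpha$ convergence for every $0<\alpha<1$, completing the proof.
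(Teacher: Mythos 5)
Your overall architecture is the same as the paper's: exponential $C^0$ convergence from Theorem \ref{maintheorem1}, a uniform $C^1$ bound on $g(t)$ obtained from a Calabi-type third-order estimate for the Monge--Amp\`ere potential, and interpolation between $C^0$ and $C^1$ to get $C^\alpha$. So the reduction is right, and you correctly locate the entire content of the theorem in the uniform bound on $|\nabla_{\mathrm{TV}}\, g|$.

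The gap is in the one step where the work actually lies. A Calabi estimate of the type in Sherman--Weinkove runs against the time-dependent reference metrics $\ti{\omega}(t)=e^{-t}\omega_{\mathrm{TV}}+(1-e^{-t})\omega_\infty$, and it requires that the torsion $\tilde T$, the curvature $\widetilde{\mathrm{Rm}}$, and the derivatives $\db\tilde T$, $\ti{\nabla}\tilde T$, $\ti{\nabla}\widetilde{\mathrm{Rm}}$, $\ov{\ti{\nabla}}\,\ov{\ti{\nabla}}\tilde T$, $\ti{\nabla}\ov{\ti{\nabla}}\tilde T$ all be bounded \emph{in the collapsing norm} $|\cdot|_{\ti{g}}$, uniformly in $t$. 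For a general metric that is merely strongly flat along the leaves these quantities only satisfy bounds of order $e^{t/2}$ (this is the content of Lemma \ref{lemma2}), which is exactly why Theorem \ref{maintheorem1} stops at $C^0$; the improvement to $O(1)$ bounds is a specific property of the Tricerri/Vaisman metrics verified by explicit computation (Lemmas \ref{lemma2bis} and \ref{lemma2ter}), and it is not a consequence of the Gauduchon condition, which plays no role in the proof. Your sketch attributes the success to homogeneity and Gauduchon-ness and to ``absorbing error terms via the exponential collapsing,'' but without the explicit verification that the reference geometry is uniformly bounded in $\ti{g}$ the maximum principle for $S=|\ti{\nabla}g|^2_g$ does not close. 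Two further points you would need: the passage from $|\ti{\nabla}g|_{\ti{g}}\le C$ to $|\nabla_{\mathrm{TV}}\,g|_{g_{\mathrm{TV}}}\le C$ uses $\ti{g}\le Cg_{\mathrm{TV}}$ together with control of $\ti{\Gamma}-\Gamma_{\mathrm{TV}}$ (for $S_M$ the reference Christoffel symbols are $t$-independent, but for $S^+$ they are not); and on $S^+$ the time dependence of $\ti{\Gamma}$ injects a new term $\ddt{}\ti{\Gamma}^i_{jk}=\ti{g}^{i\ov{q}}\ti{\nabla}_j\alpha'_{k\ov{q}}$ into the evolution of $S$, which must be shown harmless via $|\ti{\nabla}\alpha'|_{\ti{g}}\le C$. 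None of these issues invalidates your strategy, but as written the crucial estimate is asserted rather than proved.
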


A remark about the notation $\omega_{\mathrm{TV}}$ above.  In the proof of Theorem \ref{mainprop}, we will write $\omega_{\mathrm{T}}$ instead of $\omega_{\mathrm{TV}}$  for the Tricerri \cite{Tr} metric  on $S_M$, and $\omega_{\mathrm{V}}$ for the metric of Vaisman \cite{Va} on $S^+$ or $S^-$.

We note that the arguments in the proof of Theorem \ref{maintheorem1} are formally quite similar to the arguments of \cite{TWY} which considered the Chern-Ricci flow on elliptic bundles.
In fact one can use these same arguments  to extend the validity of the main theorem of \cite{TWY} to some Hermitian metrics on elliptic bundles which are not Gauduchon.\\

This paper is organized as follows. In section \ref{sectionSM} we prove Proposition \ref{easy} and Theorems \ref{maintheorem1} and \ref{mainprop} for the Inoue surfaces of type $S_M$. Those of type $S^+$ and $S^-$ are dealt with in section \ref{sectionSPM}. Lastly, in section \ref{sectionconj} we pose several conjectures and open problems on the Chern-Ricci flow.\\

\noindent
{\bf Acknowledgments. } The authors thank Xiaokui Yang for some helpful conversations.
  This work was carried out while the first and fourth named authors were visiting the Mathematics Department at Northwestern University and they thank the department for its hospitality and for providing a good academic environment.
\section{The Inoue surfaces $S_M$} \label{sectionSM}

We recall now the construction of the Inoue surfaces $S_M$.  They are defined by $S_M=( \mathbb{C} \times H)/\Gamma$, for $H$ the upper half plane, by a group of automorphisms $\Gamma$ of $\mathbb{C} \times H$ which we now describe.

Let $M \in \textrm{SL}(3, \mathbb{Z})$ be a matrix with one real eigenvalue $\lambda>1$ and two complex eigenvalues $\mu, \overline{\mu}$ (with $\mu \neq \overline{\mu}$).  Let $(\ell_1, \ell_2,
\ell_3)$ be a real eigenvector for $\lambda$ and $(m_1, m_2, m_3)$ an eigenvector for $\mu$.  Then $\Gamma$ is defined to be the group generated by the four automorphisms
$$f_0(z_1,z_2) = (\mu z_1, \lambda z_2),
 \ f_j(z_1,z_2) = (z_1+m_j, z_2+\ell_j), \ 1 \le j \le 3.$$
Here, we are writing $z_1= x_1+ \sqrt{-1}y_1$ for the coordinate on $\mathbb{C}$ and
 $z_2 = x_2 + \sqrt{-1}y_2$ for the coordinate on $H = \{ \textrm{Im}z_2=y_2>0 \}$.
The action of $\Gamma$ on $\mathbb{C} \times H$ is properly discontinuous with compact quotient.  For our calculations, we may work in a single compact fundamental domain for $S_M$ in $\mathbb{C} \times H$ using $z_1, z_2$ as local coordinates.  We may assume that $z_1$ and $z_2$ are uniformly bounded, and that $y_2$ is uniformly bounded below away from zero.

On $\mathbb{C} \times H$, define nonnegative $(1,1)$ forms $\alpha$ and $\beta$ by
$$\alpha = \frac{\sqrt{-1}}{4y_2^2} dz_2 \wedge d\ov{z}_2, \qquad \beta = \sqrt{-1} y_2 dz_1 \wedge d\ov{z}_1.$$
These forms are invariant under $\Gamma$ and hence descend to $(1,1)$ forms on the Inoue surface $S_M$, which we will denote by the same symbols.   In addition, $\alpha$ is $d$-closed.

 The metric $\omega_{\mathrm{T}}:= 4\alpha + \beta$ is called the Tricerri metric \cite{Tr}, and it was shown in \cite{TW2} that the solution to the normalized Chern-Ricci flow starting at $\omega_{\mathrm{T}}$ is given by
$$\omega(t) =  e^{-t} \beta + (1+3e^{-t}) \alpha     \longrightarrow  \alpha, \quad \textrm{as } t\rightarrow \infty.$$
Note that the first Bott-Chern class $c_1^{\textrm{BC}}(S_M)$ can be represented by
$$\textrm{Ric}(\omega_{\mathrm{T}}) = -\alpha \in c_1^{\textrm{BC}}(S_M).$$

The following lemma shows that every Hermitian metric $\omega$ on $S_M$ is conformal to a metric which is strongly flat along the leaves. Recall from \cite{In} that the map $\mathbb{C}\times H\to \mathbb{R}^+$ given by $(z_1,z_2)\mapsto y_2$ induces a smooth map $p:S_M\to S^1=\mathbb{R}^+/(x\sim \lambda x)$ which is a smooth $T^3$-bundle. Every leaf of $\mathcal{F}$ is contained in a $T^3$-fiber, and is in fact dense inside it.

\begin{lemma} \label{flatleaves}
A Hermitian metric $\of$ on $S_M$ is flat along the leaves if and only if
\begin{equation} \label{c}
\alpha \wedge \of = (p^*\eta) \alpha \wedge \beta,
\end{equation}
where $\eta:S^1\to\mathbb{R}^+$ is a smooth positive function. It is strongly flat along the leaves if and only if
\begin{equation} \label{cp}
\alpha \wedge \of = c \alpha \wedge \beta,
\end{equation}
where $c>0$ is a constant.
If $\omega$ is any Hermitian metric on $S_M$ and we define $\sigma \in C^{\infty}(S_M)$ by
$$e^\sigma =  \frac{\alpha \wedge \beta}{\alpha \wedge \omega},$$
then $\of = e^{\sigma} \omega$ satisfies (\ref{cp}) with $c=1$ and hence is strongly flat along the leaves.
\end{lemma}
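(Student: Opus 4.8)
The plan is to understand what the condition "flat along the leaves" means concretely. The leaves are slices $\mathbb{C}\times\{z_2\}$ in the universal cover, so a metric $\of$ restricted to a leaf is determined by its $dz_1\wedge d\ov{z}_1$ component, i.e. by $\of_{1\bar 1}$. A Kähler metric on $\mathbb{C}$ (or $\mathbb{C}^*$) is flat iff its Kähler form is $d$-closed with constant coefficient in the flat coordinate $z_1$, which here means $\of_{1\bar 1}$ is independent of $z_1$ (the $\de\db$ of the potential vanishes along the leaf). So the content of equation (\ref{c}) should be a clean way to extract exactly the coefficient $\of_{1\bar 1}$ and assert it has the right dependence.

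Let me first verify that $\alpha\wedge\of$ isolates $\of_{1\bar 1}$. Writing $\of = \mn(\of_{1\bar1}dz_1\wedge d\ov z_1 + \of_{1\bar2}dz_1\wedge d\ov z_2 + \of_{2\bar1}dz_2\wedge d\ov z_1 + \of_{2\bar2}dz_2\wedge d\ov z_2)$ and using $\alpha=\frac{\mn}{4y_2^2}dz_2\wedge d\ov z_2$, the wedge $\alpha\wedge\of$ kills every term except the $\of_{1\bar1}$ one, giving $\alpha\wedge\of = \frac{\of_{1\bar1}}{4y_2^2}\,(\mn)^2 dz_2\wedge d\ov z_2\wedge dz_1\wedge d\ov z_1$. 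Since $\beta=\mn y_2\,dz_1\wedge d\ov z_1$, we have $\alpha\wedge\beta = \frac{y_2}{4y_2^2}(\mn)^2 dz_2\wedge d\ov z_2\wedge dz_1\wedge d\ov z_1 = \frac{1}{4y_2}(\mn)^2 dz_2\wedge d\ov z_2\wedge dz_1\wedge d\ov z_1$. Hence the ratio $(\alpha\wedge\of)/(\alpha\wedge\beta)$ equals $\of_{1\bar1}/y_2$, a well-defined global function on $S_M$. So the plan is: write the defining function as $F:=\of_{1\bar1}/y_2$, establish $\alpha\wedge\of = F\,\alpha\wedge\beta$, and then translate the flatness conditions into conditions on $F$. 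For flatness along the leaves I must show $F$ depends only on $y_2$ (equivalently, descends via $p$ to a function $\eta$ on $S^1$); for strong flatness, $\of$ restricted to a slice equals $c\,y_2\,\mn dz_1\wedge d\ov z_1$, i.e. $\of_{1\bar1}=c\,y_2$, i.e. $F\equiv c$ constant.

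Next I would prove the two equivalences. For (\ref{c}): $\of$ is flat along the leaves iff on each slice $\mathbb{C}\times\{z_2\}$ the restricted metric $\mn\of_{1\bar1}dz_1\wedge d\ov z_1$ is a flat Kähler metric on $\mathbb{C}$. Since $z_1$ is the standard flat coordinate, flatness of this one-dimensional metric is equivalent to $\de_{z_1}\db_{z_1}\log\of_{1\bar1}=0$ on the slice; but one must be careful because the precise statement needs $\of_{1\bar1}$ to be $z_1$-independent so that the restriction is a constant multiple of the flat form, which is what makes it genuinely flat \emph{and} descend compatibly. The cleanest route is to use $\Gamma$-invariance: $\of_{1\bar1}$ pulled back to $\mathbb{C}\times H$ is $\Gamma$-invariant, and the action of $f_j$ translates $z_1$; combined with the $z_1$-independence forced by flatness and one-dimensional curvature vanishing, this will show $\of_{1\bar1}/y_2$ is a function of $y_2$ alone, invariant under $y_2\mapsto\lambda y_2$ (from $f_0$), hence descends to a smooth positive $\eta$ on $S^1$. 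The reverse implication is immediate. For (\ref{cp}), strong flatness is by definition $\of|_{\text{slice}}=c\,y_2\,\mn dz_1\wedge d\ov z_1$, which is exactly $F\equiv c$, matching (\ref{cp}).

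Finally, the last assertion is a direct check. Defining $e^\sigma = (\alpha\wedge\beta)/(\alpha\wedge\omega)$, which is a smooth positive function since both $\alpha\wedge\beta$ and $\alpha\wedge\omega$ are positive $(2,2)$-forms (the latter because $\omega$ is a metric and $\alpha\ge0$ is nondegenerate in the $z_2$-direction), I compute $\alpha\wedge\of = \alpha\wedge(e^\sigma\omega)=e^\sigma(\alpha\wedge\omega) = \frac{\alpha\wedge\beta}{\alpha\wedge\omega}(\alpha\wedge\omega)=\alpha\wedge\beta$, so (\ref{cp}) holds with $c=1$. By the equivalence just proved, $\of$ is strongly flat along the leaves. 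The main obstacle is the flatness equivalence (\ref{c}): I need to argue carefully that the vanishing of the leafwise Gauss curvature, together with $\Gamma$-invariance and the density of each leaf in its $T^3$-fiber, forces $\of_{1\bar1}/y_2$ to be constant along the fiber and thus a genuine function of $y_2$ — the subtlety being to rule out $z_1$-dependence that is flat but not translation-invariant, which is where the compactness/invariance of the quotient is essential.
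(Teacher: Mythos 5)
Your setup is identical to the paper's: the computation $(\alpha\wedge\of)/(\alpha\wedge\beta)=\of_{1\ov{1}}/y_2$, the reduction of leafwise flatness to $\de_1\de_{\ov{1}}\log\of_{1\ov{1}}=0$ on each slice (using $\de_1\de_{\ov{1}}\log y_2=0$), and the one-line verification of the final assertion are all exactly what the paper does. The gap is in the direction ``flat along the leaves $\Rightarrow$ \eqref{c}'': you correctly observe that vanishing of the leafwise curvature does not by itself force $z_1$-independence of $\of_{1\ov{1}}/y_2$ (a conformal factor like $e^{x_1}$ is flat but not translation-invariant), and you flag this as ``the main obstacle,'' but you never supply the argument that closes it. Appealing to ``$\Gamma$-invariance and the action of $f_j$ translating $z_1$'' is not a proof: the generators $f_j$ translate $z_1$ and $z_2$ simultaneously, so $\Gamma$-invariance imposes no direct constraint on the $z_1$-dependence along a fixed slice $\mathbb{C}\times\{z_2\}$.

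The missing idea is a Liouville argument. Since $\log(\of_{1\ov{1}}/y_2)$ is the pullback of a continuous function on the compact manifold $S_M$, it is bounded on $\mathbb{C}\times H$; leafwise flatness says it is harmonic in $z_1$ on each slice; and a bounded harmonic function on $\mathbb{C}$ is constant. This is what makes the ratio constant on each leaf. Only after this step does the density of each leaf in its $T^3$-fiber enter, to upgrade ``constant on each leaf'' to ``constant on each fiber,'' so that the ratio descends through $p$ to a smooth positive function $\eta$ on $S^1$; density plays no role in killing the $z_1$-dependence, contrary to what your closing paragraph suggests. (A maximum-principle variant would also work: at a point of a fiber where the ratio is maximal, the restriction to the leaf through that point is a harmonic function on $\mathbb{C}$ with an interior maximum, hence constant, and density of that single leaf finishes the fiber.) With this step inserted, your proof coincides with the paper's.
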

\begin{proof} Let $\pi:\mathbb{C}\times H\to S_M$ be the quotient map. Then a Hermitian metric $\of$ on $S_M$ is flat along the leaves if and only if its pullback
$\pi^*\of$ is flat when restricted to any slice $\mathbb{C}\times\{z_2\}$, $z_2\in H$. We can write
$$\pi^*\of=a\mn dz_1\wedge d\ov{z}_1+b\mn dz_2\wedge d\ov{z}_2+c\mn dz_1\wedge d\ov{z}_2+\ov{c}\mn dz_2\wedge d\ov{z}_1,$$
so that
\begin{equation} \label{fr1}
\frac{\alpha\wedge\of}{\alpha\wedge\beta}=\frac{a}{y_2},
\end{equation}
and \eqref{c} is equivalent to
\begin{equation} \label{fr}
\frac{a}{y_2}=\pi^*p^*\eta.
\end{equation}
Note that the function $\pi^*p^*\eta$ depends only on $y_2$.
Since the restriction of $\pi^*\of$ to a slice $\mathbb{C}\times\{z_2\}$ equals $a\mn dz_1\wedge d\ov{z}_1$, and its Ricci curvature equals
$-\de_1\de_{\ov{1}}\log a,$ we conclude that if \eqref{c} holds then $\of$ is flat along the leaves.

Conversely, if $\of$ is flat along the leaves, then for each fixed $z_2\in H$ we have that
$$\de_1\de_{\ov{1}}\log \left(\frac{a}{y_2}\right)=0.$$
Thanks to \eqref{fr1} we see that the function $\log \frac{a}{y_2}$ on $\mathbb{C}\times H$ is $\Gamma$-invariant, hence bounded (because it is the pullback of a function from $S_M$). Therefore,
$\log \frac{a}{y_2}$ for $z_2$ fixed is a bounded harmonic function on $\mathbb{C}$, and so it must be constant. In other words, the ratio $\frac{\alpha\wedge\of}{\alpha\wedge\beta}$ is constant along each leaf of $\mathcal{F}$. Since every leaf is dense in the $T^3$ fiber which contains it, we conclude that $\frac{\alpha\wedge\of}{\alpha\wedge\beta}$ equals the pullback of a function from $S^1$.

On the other hand, it is now clear that $\omega$ is strongly flat along the leaves if and only if \eqref{cp} holds, or equivalently,
\begin{equation} \label{fr2}
\frac{a}{y_2}=c,
\end{equation}
where $c>0$ is a constant.  The last assertion of the lemma is immediate.
\end{proof}

Observe that in the coordinates $z_1, z_2$ above, the condition (\ref{cp}) is equivalent to
\begin{equation}\label{glf}
(\glf)_{1\ov{1}}  = cy_2,
\end{equation}
where we are writing $\of=\sqrt{-1} (\glf)_{i\ov{j}} dz_i \wedge d\ov{z}_j$.

Let now $\of$ be a metric which is strongly flat along the leaves, as in the set up of Theorem \ref{maintheorem1}.
We remark that in the following arguments this metric plays exactly the same role as the semi-flat metric $\omega_{{\rm flat}}$ considered in \cite{TWY} on elliptic bundles.
We will write the normalized Chern-Ricci flow as a parabolic complex Monge-Amp\`ere equation.  First, we define
\begin{equation} \label{omegahat}
\tilde{\omega}=\tilde{\omega}(t) = e^{-t} \of + (1-e^{-t}) \alpha>0,
\end{equation}
and write $\tilde{g}$ for the Hermitian metric associated to $\tilde{\omega}$.
We define a volume form $\Omega$ by
\begin{equation} \label{Omega}
\Omega = 2 \alpha \wedge \of = 2c \alpha \wedge \beta,
\end{equation}
for $c$ the constant defined by (\ref{cp}).  Observe that
$$\ddbar \log \Omega = \alpha,$$
by direct calculation using \eqref{Omega}.
It follows that the normalized Chern-Ricci flow (\ref{NCRF})  is equivalent to the equation
\begin{equation} \label{pma}
\ddt{} \varphi = \log \frac{e^t (\tilde{\omega} + \ddbar \varphi)^2}{\Omega} - \varphi, \ \tilde{\omega} + \ddbar \varphi>0, \ \varphi(0)= \rho.
\end{equation}
Namely, if $\varphi$ solves (\ref{pma}) then $\omega(t)= \tilde{\omega}+ \ddbar \varphi$ solves (\ref{NCRF}), as is readily checked.  Conversely, given a solution $\omega(t)$ of (\ref{NCRF}) we can find (cf. \cite{TW}, for example) a solution $\varphi=\varphi(t)$ of (\ref{pma}) with the property that $\omega(t)  = \tilde{\omega} + \ddbar \varphi$.

Let now $\varphi=\varphi(t)$ solve (\ref{pma}) and write $\omega= \omega(t) = \tilde{\omega} + \ddbar \varphi$ for the corresponding metrics along the normalized Chern-Ricci flow.
We first establish uniform estimates on the potential $\varphi$ and its time derivative $\dot{\varphi}$.   Given the choice of $\tilde{\omega}$ and $\Omega$, the proof is formally almost identical to
\cite[Lemma 3.6.3, Lemma 3.6.7]{SW} (see also \cite{ST, G2, FZ, TWY}).

\begin{lemma} \label{lemma1}
There exists a uniform positive constant $C$ such that on $S_M \times [0,\infty)$,
\begin{enumerate}
\item[(i)] $\displaystyle{|\varphi| \le C(1+t)e^{-t}}$.
\item[(ii)] $\displaystyle{| \dot{\varphi} | \le C}$.
\item[(iii)] $\displaystyle{ C^{-1} \tilde{\omega}^2 \le \omega^2 \le C \tilde{\omega}^2}$.
\end{enumerate}
\end{lemma}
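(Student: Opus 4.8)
The plan is to derive all three estimates by the maximum principle applied to the parabolic complex Monge--Amp\`ere equation \eqref{pma}, following the pattern of \cite[Lemma 3.6.3, Lemma 3.6.7]{SW}. Throughout I write $F:=\dot\varphi+\varphi=\log\frac{e^t\omega^2}{\Omega}$ and let $\Delta$ denote the complex Laplacian of the evolving metric, $\Delta f=\tr{\omega}{(\ddbar f)}$. Three structural facts drive everything. First, since $\alpha\wedge\alpha=0$ and $\Omega=2\alpha\wedge\of$, a direct computation gives $\frac{e^t\tilde\omega^2}{\Omega}=1+e^{-t}\!\left(\frac{\of^2}{\Omega}-1\right)$, which lies between two positive constants and tends to $1$; hence $\left|\log\frac{e^t\tilde\omega^2}{\Omega}\right|\le Ce^{-t}$. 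Second, $\dot{\tilde\omega}=\alpha-\tilde\omega$, with $\alpha-\tilde\omega=e^{-t}(\alpha-\of)$. Third, $\Delta\varphi=2-\tr{\omega}{\tilde\omega}$.

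For (i), I apply the maximum principle in space at each fixed time. At a spatial maximum of $\varphi$ we have $\ddbar\varphi\le 0$, so $\omega\le\tilde\omega$, hence $\omega^2\le\tilde\omega^2$ and $F=\log\frac{e^t\omega^2}{\Omega}\le\log\frac{e^t\tilde\omega^2}{\Omega}\le Ce^{-t}$; at a spatial minimum the reverse inequalities hold. Feeding this into the evolution of $\varphi_{\max}(t)$ and $\varphi_{\min}(t)$ gives $\frac{d}{dt}(e^t\varphi_{\max})\le C$ and $\frac{d}{dt}(e^t\varphi_{\min})\ge -C$, and integrating from the bounded initial data $\varphi(0)=\rho$ yields $|\varphi|\le C(1+t)e^{-t}$.

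Next I prove the upper bounds in (ii) and (iii) together by a first-order maximum principle. A direct computation from \eqref{pma} gives $(\ddt{}-\Delta)F=\tr{\omega}{\alpha}-1$ and $(\ddt{}-\Delta)\varphi=\dot\varphi-2+\tr{\omega}{\tilde\omega}$, and combining them I find, for the quantity $\Psi:=\dot\varphi-\varphi$,
\begin{equation*}
(\ddt{}-\Delta)\Psi=(2e^{-t}-1)\,\tr{\omega}{\alpha}-2e^{-t}\,\tr{\omega}{\of}-2\dot\varphi+3.
\end{equation*}
For $t\ge\log 2$ the first two terms are $\le 0$ because $\alpha,\of\ge 0$, so at a point realizing $\Psi_{\max}(t)$ we have $\Delta\Psi\le 0$ and $\frac{d}{dt}\Psi_{\max}\le -2\dot\varphi+3=-2\Psi_{\max}-2\varphi+3\le -2\Psi_{\max}+C$, using (i). ODE comparison bounds $\Psi_{\max}$, hence $\dot\varphi=\Psi+\varphi$, from above. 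Since $F$ is thereby bounded above while $\frac{e^t\tilde\omega^2}{\Omega}$ is bounded below, this is exactly the upper bound $\omega^2\le C\tilde\omega^2$ of (iii).

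The main obstacle is the remaining lower bounds, namely $\omega^2\ge C^{-1}\tilde\omega^2$ in (iii) and $\dot\varphi\ge -C$ in (ii); given (i) these are equivalent to an upper bound $\tr{\omega}{\tilde\omega}\le C$, and the first-order argument above fails here since the sign of the $\tr{\omega}{\of}$ term becomes unfavorable. I will instead prove $\tr{\omega}{\tilde\omega}\le C$ by a second-order (parabolic Schwarz lemma) estimate, computing $(\ddt{}-\Delta)\log\tr{\omega}{\tilde\omega}$. The favorable terms come from $\dot{\tilde\omega}=\alpha-\tilde\omega$ together with the constant negative holomorphic sectional curvature of the $d$-closed Poincar\'e form $\alpha$; the difficulty is the third-order torsion terms that appear because $\tilde\omega$ is merely Hermitian and time-dependent, exactly as in \cite{TW, TWY}. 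These are absorbed by a Cauchy--Schwarz argument against the good negative term, using the upper volume bound just established to control $\omega^2$. Once $\tr{\omega}{\tilde\omega}\le C$ is in hand, the eigenvalues of $\omega$ relative to $\tilde\omega$ are bounded below, giving $\omega^2\ge C^{-1}\tilde\omega^2$, and then $\dot\varphi=F-\varphi\ge -C$ completes (ii) and (iii).
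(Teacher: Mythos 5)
Parts of your proposal are fine: your argument for (i) is essentially the paper's, and your first-order maximum principle for the upper bounds on $\dot\varphi$ and $\omega^2/\tilde\omega^2$ using $\Psi=\dot\varphi-\varphi$ is a correct minor variant of the paper's quantity $\dot\varphi-(C_0-1)\varphi$ (you just need to note that the restriction $t\ge\log 2$ is harmless since everything is smooth and bounded on $[0,\log 2]$). The genuine gap is in the lower bounds. First, a small logical slip: given (i), the bounds $\dot\varphi\ge -C$ and $\omega^2\ge C^{-1}\tilde\omega^2$ are \emph{implied by}, but not equivalent to, $\tr{\omega}{\tilde\omega}\le C$; the latter is the full second-order estimate (it is part (i) of Theorem \ref{thmestimates1} in the paper, proved only \emph{after} this lemma). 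More seriously, your plan to obtain $\tr{\omega}{\tilde\omega}\le C$ at this stage by a parabolic Schwarz lemma is not viable as described. The reference metric is not $\alpha$ but $\tilde\omega=e^{-t}\of+(1-e^{-t})\alpha$, and its curvature and the derivatives of its torsion are only $O(e^{t/2})$ in the $\tilde g$-norm (Lemma \ref{lemma2}), so the Schwarz-lemma computation produces error terms growing like $e^{t/2}$, not terms controlled by a ``constant negative holomorphic sectional curvature.'' Moreover, absorbing the third-order torsion terms requires precisely the uniform equivalence $C^{-1}\tilde\omega\le\omega\le C\tilde\omega$ (and a Phong--Sturm type barrier built from $e^{t/2}\varphi$), which is what one is trying to prove; this is why the paper establishes the second-order estimate only later, using (i)--(iii) of this lemma as input. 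Your route is therefore circular, or at best far harder than claimed.

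The fix is that you gave up on the first-order argument too early. The sign problem you ran into comes from your particular choice $\dot\varphi-\varphi$; replacing it by $\dot\varphi+2\varphi$ reverses the sign of the $\tr{\omega}{\tilde\omega}$ term:
\begin{equation*}
\left( \ddt{} - \Delta \right) (\dot{\varphi} + 2\varphi ) \ge \tr{\omega}{\tilde{\omega}} + \dot{\varphi} -3 \ge \frac{1}{C} e^{-(\dot{\varphi}+\varphi)/2} + \dot{\varphi} -3,
\end{equation*}
where the last step is the arithmetic--geometric mean inequality $\tr{\omega}{\tilde\omega}\ge 2(\tilde\omega^2/\omega^2)^{1/2}$ together with the equation \eqref{pma} and the uniform equivalence of $e^t\tilde\omega^2$ and $\Omega$. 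At a minimum of $\dot\varphi+2\varphi$ the exponential term dominates $\dot\varphi$ as $\dot\varphi\to-\infty$, so the maximum principle gives $\dot\varphi\ge -C$ directly, and (iii) then follows from (i), (ii) and \eqref{pma}. This is the paper's argument, and it keeps the whole lemma at the level of zeroth- and first-order maximum principles, as it must be for the later second-order estimates to have their required input.
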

\begin{proof} Since the arguments are so similar to those in \cite{SW, G2, TWY}, we will be brief.
For (i), the key observation is that, by the choice of $\tilde{\omega}$ and $\Omega$, we have
\begin{equation}
\left| e^t \log \frac{e^t \tilde{\omega}^2}{\Omega} \right| \le C',
\end{equation}
for uniform $C'$. To see this, note that from (\ref{omegahat}) and (\ref{Omega}),
$$e^t \log \frac{e^t \tilde{\omega}^2}{\Omega} = e^t \log \left( \frac{2 \alpha \wedge \of  + e^{-t} (\of^2 - 2 \of \wedge \alpha)}{2 \alpha \wedge \of} \right) = e^t \log (1+ O(e^{-t})),$$
and this is bounded.  If we define $Q=e^t \varphi - (C'+1)t$,  we see that if $\sup_{S_M \times [0,t_0]} Q= Q(x_0,t_0)$ for some $x_0 \in S_M$ and $t_0>0$, we have at that point,
$$0 \le \frac{\partial Q}{\partial t} \le e^t \log \frac{e^t \tilde{\omega}^2}{\Omega} -C' -1 \le -1,$$
a contradiction.  It follows that $\sup_{S_M} Q$ is bounded from above by its value at time $0$, giving   $\varphi \le C(1+t)e^{-t}$.  The lower bound is similar.

To prove (ii), choose a constant $C_0$ so that $C_0 \tilde{\omega} > \alpha$ for all $t \ge 0$.  Then compute, for $\Delta = g^{i\ov{j}} \partial_i \partial_{\ov{j}}$,
\[
\begin{split}
\left( \ddt{} - \Delta \right) \left(\dot{\varphi} - (C_0-1)\varphi \right) = {} & \tr{\omega}{(\alpha - \tilde{\omega})} + 1 - C_0\dot{\varphi} + (C_0-1) \tr{\omega}{(\omega- \tilde{\omega})} \\
< {} & 1- C_0 \dot{\varphi} + 2(C_0-1).
\end{split}
\]
It then follows from the maximum principle that $\dot{\varphi}$ is bounded from above.   For the lower bound of $\dot{\varphi}$,
\[
\begin{split}
\left( \ddt{} - \Delta \right) (\dot{\varphi} + 2\varphi ) =  {} & \tr{\omega}{(\alpha-\tilde{\omega})} + 1 + \dot{\varphi} - 2\tr{\omega}{(\omega-\tilde{\omega})} \\
\ge {} & \tr{\omega}{\tilde{\omega}} + \dot{\varphi} -3 \ge \frac{1}{C} e^{-(\dot{\varphi}+\varphi)/2} + \dot{\varphi} -3,
\end{split}
\]
for a uniform $C>0$, where for the last inequality we have used the geometric-arithmetic means inequality, the equation (\ref{pma}) and the fact that $e^t\tilde{\omega}^2$ and $\Omega$ are uniformly equivalent.  It follows from the maximum principle that $\dot{\varphi}$ is bounded from below.

Finally, (iii) is a consequence of (i), (ii) and the equation (\ref{pma}).
\end{proof}

Next,
we bound the torsion and curvature of the reference metrics $\tilde{g}$.  We will denote the Chern connection, torsion and curvature of $\tilde{g}$ by $\tilde{\nabla}$, $\tilde{T}$ and $\widetilde{\textrm{Rm}}$ respectively.

We lower an index of the torsion into the third subscript, so
$$\tilde{T}_{ij\ov{\ell}} := \tilde{g}_{k\ov{\ell}} \tilde{T}^k_{ij} = \partial_i \tilde{g}_{j\ov{\ell}} - \partial_j \tilde{g}_{i\ov{\ell}}.$$
  Writing $\Tlf$ for the torsion of the metric $\glf$, we see that since $\alpha$ is a closed form, we have
 \begin{equation} \label{torsiontilde}
 \tilde{T}_{ij\ov{\ell}} = e^{-t} (\Tlf)_{ij\ov{\ell}}.
 \end{equation}
We prove the following bounds on the torsion and curvature of $\tilde{g}$, which are analogous to those in \cite[Lemma 4.1]{TWY}.

\begin{lemma} \label{lemma2}
There exists a uniform constant $C$ such that
\begin{enumerate}
\item[(i)] $\displaystyle{| \tilde{T}|_{\tilde{g}} \le C}$.
\item[(ii)] $\displaystyle{ | \ov{\partial} \tilde{T} |_{\tilde{g}}+ | \tilde{\nabla} \tilde{T}|_{\tilde{g}} + | \widetilde{\emph{Rm}}|_{\tilde{g}} \le Ce^{t/2}}$.
\end{enumerate}
\end{lemma}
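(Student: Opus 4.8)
The plan is to exploit the strongly anisotropic structure of $\ti g$: along the leaf direction $z_1$ the metric has size $e^{-t}$, while in the base direction $z_2$ it is of order one. Using \eqref{omegahat}, \eqref{glf} and $\alpha=\frac{\mn}{4y_2^2}dz_2\wedge d\ov{z}_2$, one computes $\ti g_{1\ov1}=e^{-t}cy_2$, $\ti g_{1\ov2}=e^{-t}(\glf)_{1\ov2}$, $\ti g_{2\ov1}=e^{-t}(\glf)_{2\ov1}$ and $\ti g_{2\ov2}=e^{-t}(\glf)_{2\ov2}+(1-e^{-t})\tfrac{1}{4y_2^2}$, so that $\det\ti g$ is comparable to $e^{-t}$ and $\ti g^{1\ov1}=O(e^{t})$ while the other entries of $\ti g^{-1}$ are $O(1)$. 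Because $\glf$ and all its derivatives are bounded on the compact fundamental domain, every coordinate derivative of $\ti g_{k\ov l}$ is $O(e^{-t})$, the only exceptions being $\partial_2\ti g_{2\ov2}$ and $\partial_2\partial_{\ov2}\ti g_{2\ov2}$, which are $O(1)$ (coming from $\alpha_{2\ov2}$). The key structural fact, coming precisely from $\of$ being strongly flat along the leaves, is that $\ti g_{1\ov1}=e^{-t}cy_2$ is a function of $y_2$ alone, so that $\partial_1\ti g_{1\ov1}=\partial_1\partial_{\ov1}\ti g_{1\ov1}=0$.

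I would then pass to the rescaled holomorphic frame $e_1=e^{t/2}\partial_1$, $e_2=\partial_2$. In this frame $\ti g$ has diagonal entries $cy_2$ and $\ti g_{2\ov2}$ of order one and off-diagonal entries $O(e^{-t/2})$, so on the fundamental domain it is uniformly equivalent to the identity for all $t\ge0$. Consequently $|\cdot|_{\ti g}$ of any tensor is, up to a uniform constant, the sum of squares of its frame components, and for a tensor with all indices lowered the frame component is obtained from the coordinate component by multiplying by $e^{t/2}$ for each index equal to $1$. For (i), \eqref{torsiontilde} gives $\ti T_{ij\ov\ell}=e^{-t}(\Tlf)_{ij\ov\ell}$ with $\Tlf$ bounded and nonzero only when $\{i,j\}=\{1,2\}$; the largest weight is $e^{t}$ (for $\ti T_{12\ov1}$), so the frame components are $O(1)$ and $|\ti T|_{\ti g}\le C$.

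For the first-order quantities in (ii) I note that the coordinate Christoffel symbols $\ti\Gamma^k_{mi}=\ti g^{k\ov q}\partial_m\ti g_{i\ov q}$ are all $O(1)$, since the only large factor $\ti g^{1\ov1}=O(e^{t})$ meets derivatives $\partial_m\ti g_{i\ov1}=O(e^{-t})$. Hence both $\partial_{\ov m}\ti T_{ij\ov\ell}$ and the covariant derivative $\ti\nabla_m\ti T_{ij\ov\ell}$ are $O(e^{-t})$ in coordinates. Differentiating adds at most one further index equal to $1$, raising the maximal weight to $e^{3t/2}$, so the frame components are $O(e^{t/2})$ and $|\ov\partial\ti T|_{\ti g}+|\ti\nabla\ti T|_{\ti g}\le Ce^{t/2}$.

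For the curvature I would substitute $\ti g$ into $\widetilde{\textrm{Rm}}_{i\ov jk\ov l}=-\partial_i\partial_{\ov j}\ti g_{k\ov l}+\ti g^{p\ov q}(\partial_i\ti g_{k\ov q})(\partial_{\ov j}\ti g_{p\ov l})$. Every coordinate component is $O(e^{-t})$ except $\widetilde{\textrm{Rm}}_{2\ov22\ov2}=O(1)$ and $\widetilde{\textrm{Rm}}_{1\ov11\ov1}$. The components with exactly three indices equal to $1$ carry the maximal weight $e^{3t/2}$ and, being $O(e^{-t})$, contribute the dominant $O(e^{t/2})$. The delicate point, and the main obstacle, is the all-leaf component $\widetilde{\textrm{Rm}}_{1\ov11\ov1}$, whose weight is $e^{2t}$: a crude count gives only $O(e^{-t})$ and hence a frame value $O(e^{t})$, which is far too large. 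Here the structural vanishing $\partial_1\ti g_{1\ov1}=\partial_1\partial_{\ov1}\ti g_{1\ov1}=0$ is essential, as it annihilates both the first term and the dangerous piece $\ti g^{1\ov1}(\partial_1\ti g_{1\ov1})(\partial_{\ov1}\ti g_{1\ov1})$ of the second term, improving the estimate to $\widetilde{\textrm{Rm}}_{1\ov11\ov1}=O(e^{-2t})$ and thus a frame value $O(1)$. Collecting all cases, every frame component of $\widetilde{\textrm{Rm}}$ is $O(e^{t/2})$, giving $|\widetilde{\textrm{Rm}}|_{\ti g}\le Ce^{t/2}$. This is exactly the mechanism of \cite[Lemma 4.1]{TWY}, and it is the step where the hypothesis that $\of$ is strongly flat along the leaves enters decisively.
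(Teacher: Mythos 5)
Your proposal is correct and follows essentially the same route as the paper: the same explicit coordinate formulas for $\tilde g$, the same weight counting (your rescaled frame $e^{t/2}\partial_1,\partial_2$ is just a bookkeeping repackaging of the paper's norm estimates with $\tilde g^{1\ov1}\sim e^t$), the skew-symmetry of the torsion killing the worst-weighted component, and the crucial vanishing $\partial_1\tilde g_{1\ov1}=\partial_1\partial_{\ov1}\tilde g_{1\ov1}=0$ forcing $\tilde R_{1\ov11\ov1}=O(e^{-2t})$. No substantive differences from the paper's argument.
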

\begin{proof}
Using the holomorphic coordinates as described above and (\ref{glf}), we have
\begin{equation}  \label{tildegf}
\begin{split}
& \tilde{g}_{1\ov{1}} =  e^{-t} cy_2, \quad  \tilde{g}_{1\ov{2}} = e^{-t} (\glf)_{1\ov{2}}, \\
& \tilde{g}_{2\ov{2}} =  e^{-t} (\glf)_{2\ov{2}} + (1-e^{-t}) \frac{1}{4y_2^2}.
\end{split}
\end{equation}
Since we are working in a fixed chart with $y_2$  uniformly bounded (both from above and below away from zero), we have the estimates
\begin{equation} \label{tildege}
\begin{split}
& C^{-1} e^{-t} \le \tilde{g}_{1\ov{1}} \le C e^{-t}, \quad C^{-1} e^t \le \tilde{g}^{1\ov{1}} \le C e^t \\
& C^{-1} \le \tilde{g}_{2\ov{2}} \le C,  \quad C^{-1} \le \tilde{g}^{2\ov{2}} \le C \\
&  | \tilde{g}_{1\ov{2}}| \le Ce^{-t}, \quad | \tilde{g}^{1\ov{2}}| \le C.
\end{split}
\end{equation}
Observe that these estimates are identical to the local estimates given in \cite[Lemma 4.1]{TWY}, where here $z_1$ plays the role of the ``fiber'' direction and $z_2$ the role of the ``base'' direction.

As in \cite{TWY}, (i) follows from (\ref{torsiontilde}).  Indeed,
$$| \tilde{T}|^2_{\tilde{g}} = e^{-2t} \tilde{g}^{i\ov{j}} \tilde{g}^{k\ov{\ell}} \tilde{g}^{p\ov{q}} (\Tlf)_{ik\ov{q}} \ov{(\Tlf)_{j\ell \ov{p}}} \le C,$$
since the only terms involving the cube of $\tilde{g}^{1\ov{1}}$ vanish by the skew-symmetry of $(\Tlf)_{ik\ov{q}}$ in $i$ and $k$, and  by (\ref{tildege}) all other terms are bounded.

Next we have a bound on the $\tilde{g}$ norm of the Christoffel symbols $\tilde{\Gamma}^p_{ik}$ of the Chern connection of $\tilde{g}$:
\begin{equation} \label{Christoffel}
| \tilde{\Gamma}^p_{ik}|^2_{\tilde{g}} := \tilde{g}^{i\ov{j}} \tilde{g}^{k\ov{\ell}} \tilde{g}_{p\ov{q}} \tilde{\Gamma}^p_{ik} \ov{\tilde{\Gamma}^q_{j\ell}} = \tilde{g}^{i\ov{j}} \tilde{g}^{k\ov{\ell}} \tilde{g}^{p\ov{q}} \partial_i \tilde{g}_{k\ov{q}} \partial_{\ov{j}} \tilde{g}_{p\ov{\ell}} \le C e^t.
\end{equation}
This follows from (\ref{tildege}) and the fact that all terms $\partial_i  \tilde{g}_{k\ov{q}}$ are of order $O(e^{-t})$ unless $i=k=q=2$.  Note that the quantity $| \tilde{\Gamma}^p_{ik}|^2_{\tilde{g}}$ is only locally defined.

Then from (\ref{torsiontilde}),
$$| \ov{\partial} \tilde{T}|^2_{\tilde{g}} = | \tilde{\nabla}_{\ov{\ell}} \tilde{T}_{ij\ov{k}}|^2_{\tilde{g}} = e^{-2t} | \partial_{\ov{\ell}} (\Tlf)_{ij\ov{k}} - \ov{\tilde{\Gamma}^r_{\ell k}} (\Tlf)_{ij\ov{r}} |^2_{\tilde{g}} \le Ce^t,$$
where for the last inequality we have used (\ref{Christoffel}) and the estimates
$$| \partial_{\ov{\ell}} (\Tlf)_{ij\ov{k}}|^2_{\tilde{g}} \le Ce^{3t}, \quad |( \Tlf)_{ij\ov{r}}|^2_{\tilde{g}} \le Ce^{2t},$$
which follow from (\ref{tildege}) and the skew symmetry of torsion.  Similarly, we obtain
$$| \tilde{\nabla} \tilde{T} |^2_{\tilde{g}} \le C e^t,$$
and it remains to bound the curvature of $\tilde{g}$.

Recall that the curvature of the Chern connection of $\tilde{g}$ is given by
$$\tilde{R}_{i\ov{j}k\ov{\ell}} = - \partial_i \partial_{\ov{j}} \tilde{g}_{k\ov{\ell}} + \tilde{g}^{p\ov{q}} \partial_i \tilde{g}_{k\ov{q}} \partial_{\ov{j}} \tilde{g}_{p\ov{\ell}}.$$
Noting that from (\ref{tildegf}),
$$\partial_1 \partial_{\ov{1}} \tilde{g}_{1\ov{1}} =0, \quad |\partial_{i}\partial_{\ov{j}} \tilde{g}_{k\ov{\ell}}| \le Ce^{-t}, \quad \textrm{if } (i,j,k,\ell) \neq (2,2,2,2),$$
and
 $$\partial_1 \tilde{g}_{1\ov{1}}=0, \quad |\partial_k \tilde{g}_{i\ov{j}}| \le Ce^{-t},\quad \textrm{if } (i,j,k) \neq (2,2,2),$$
 we obtain using (\ref{tildege}),
 $$| \tilde{R}_{1\ov{1}1\ov{1}}| \le Ce^{-2t}.$$
Moreover, straightforward computations, similar to those in \cite[Lemma 4.1]{TWY}, give $| \tilde{R}_{2\ov{2}2\ov{2}}| \le C$, and
$$| \tilde{R}_{i\ov{j}k\ov{\ell}} | \le Ce^{-t}, \quad \textrm{for } i,j,k,\ell \ \textrm{not all equal}.$$
It then follows, by considering the various cases, again as in \cite{TWY}, that
$$|\widetilde{\textrm{Rm}}|^2_{\tilde{g}} = \tilde{g}^{i\ov{q}} \tilde{g}^{p\ov{j}} \tilde{g}^{k\ov{s}} \tilde{g}^{r\ov{\ell}} \tilde{R}_{i\ov{j}k\ov{\ell}} \ov{\tilde{R}_{q\ov{p}s\ov{r}}} \le Ce^t,$$
as required.
\end{proof}

We can then apply, almost verbatim, the arguments of \cite{TWY} to establish estimates for solutions of the normalized Chern-Ricci flow on $S_M$.  There are some minor differences, which we now discuss.  In \cite{TWY}, the reference $(1,1)$ forms  $\tilde{\omega}(t)$ are positive definite only for $t$ sufficiently large, whereas here they are positive definite for all $t\ge 0$.  In \cite{TWY} there is a global holomorphic surjective map $\pi$ from the manifold to a Riemann surface $(S, \omega_S)$, and the $(1,1)$ form $\pi^* \omega_S$ plays the role of $\alpha$.  In our case, $\alpha$ is not globally of this form.   Nevertheless, we will see that the arguments still go through without trouble.

\begin{theorem} \label{thmestimates1}
For $\varphi=\varphi(t)$ solving (\ref{pma}) on $S_M$, the following estimates hold.
\begin{enumerate}
\item[(i)]  There exists a uniform constant $C$ such that
$$C^{-1} \tilde{\omega} \le \omega(t) \le C \tilde{\omega}.$$
\item[(ii)] There exists a uniform constant $C$ such that the Chern scalar curvature  satisfies the bound
$$-C \le R \le C e^{t/2}.$$
\item[(iii)] For any $\eta, \sigma$ with $0<\eta<1/2$ and $0<\sigma<1/4$, there exists a constant $C_{\eta, \sigma}$ such that
$$-C_{\eta,\sigma} e^{-\eta t} \le \dot{\varphi} \le C_{\eta,\sigma}  e^{-\sigma t}.$$
\item[(iv)] For any $\ve>0$ with $0<\ve<1/8$ there exists a constant $C_{\ve}$ such that
$$(1-C_{\ve}e^{-\ve t}) \tilde{\omega} \le \omega(t) \le (1+C_{\ve}e^{-\ve t}) \tilde{\omega}.$$
\end{enumerate}
\end{theorem}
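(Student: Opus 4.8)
The plan is to follow, essentially verbatim, the parabolic maximum-principle scheme of \cite{TWY}, with $\alpha$ playing the role of the pullback of the base metric and $\tilde{\omega}$ the role of the semi-flat reference metric. Lemmas \ref{lemma1} and \ref{lemma2} supply exactly the inputs the scheme requires: $C^0$ control of $\varphi,\dot\varphi$ and of the volume form, together with the torsion and curvature bounds for $\tilde{g}$. For (i) I would prove the second-order estimate $\tr{\tilde{g}}{g}\le C$ by applying the maximum principle to $Q=\log \tr{\tilde{g}}{g}-A\varphi$ for a large constant $A$. Computing $(\ddt{}-\Delta)\log \tr{\tilde{g}}{g}$ by the usual Aubin--Yau calculation produces terms involving the torsion and curvature of $\tilde{g}$; the contribution of $-A\varphi$ contains the term $-A\,\tr{g}{\tilde{\omega}}$ (using $\Delta\varphi=2-\tr{g}{\tilde{\omega}}$) plus bounded terms, and for $A$ large this dominates the multiple of $\tr{g}{\tilde{\omega}}$ coming from curvature and torsion, forcing $\tr{g}{\tilde{\omega}}\le C$ at a maximum of $Q$, whence $\tr{\tilde{g}}{g}\le C$. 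Together with the volume bound Lemma \ref{lemma1}(iii), in complex dimension two the two eigenvalues of $g$ with respect to $\tilde{g}$ are then pinched between two positive constants, which gives the two-sided estimate (i).

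For (ii) I would first record the identities obtained by differentiating (\ref{pma}) in time, namely $\ddot\varphi+\dot\varphi=-R-1$ and $R=-\tr{g}{\alpha}-\Delta(\dot\varphi+\varphi)$. The lower bound $R\ge -C$ follows from a maximum-principle argument for the evolution of $R$ (equivalently, a bound $\ddot\varphi\le C$): the dominant quadratic term $|\Ric|^2\ge \tfrac12 R^2$ controls $R$ when it is very negative, while the torsion contributions are handled using (i) and Lemma \ref{lemma2}(i). The upper bound $R\le Ce^{t/2}$ is then obtained by a further maximum-principle argument in which the reference curvature enters; the exponent $e^{t/2}$ is inherited directly from the bound $|\widetilde{\mathrm{Rm}}|_{\tilde g}\le Ce^{t/2}$ of Lemma \ref{lemma2}(ii).

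For (iii) and (iv) I would run a bootstrap based on the evolution $(\ddt{}-\Delta)\dot\varphi=1+\tr{g}{\alpha}-\tr{g}{\tilde{\omega}}-\dot\varphi$, testing against $e^{\eta t}\dot\varphi$ and $e^{\sigma t}\dot\varphi$. By (i) the forcing term is bounded, and at leading order $\tr{g}{\tilde{\omega}}\to 2$ and $\tr{g}{\alpha}\to 1$, so $\dot\varphi$ is driven to zero; the scalar-curvature bound (ii) is what caps the admissible rates, and the asymmetry in the available control on the two signs accounts for the different thresholds $\eta<1/2$ and $\sigma<1/4$. Feeding the resulting exponential decay of $\dot\varphi$ back into a refined maximum-principle estimate for $\tr{\tilde{g}}{g}-2$ (equivalently for $\log(\omega^2/\tilde{\omega}^2)$) improves the pinching in (i) to $1\pm C_{\ve}e^{-\ve t}$; the loss incurred at each step of propagating the $e^{t/2}$ growth through (ii)--(iii) forces $\ve<1/8$, giving (iv).

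The main obstacle is the second-order estimate (i). The difficulty is the apparent $e^{t/2}$ growth of $|\widetilde{\mathrm{Rm}}|_{\tilde g}$ in Lemma \ref{lemma2}(ii), which would naively destroy the maximum principle. The resolution, exactly as in \cite{TWY}, is that the individual curvature components of $\tilde{g}$ are in fact bounded or decaying (for instance $|\tilde R_{1\bar 1 1\bar 1}|\le Ce^{-2t}$, $|\tilde R_{2\bar 2 2\bar 2}|\le C$), the growth of the norm being an artifact of contracting with the blowing-up inverse metric $\tilde g^{1\bar 1}=O(e^t)$; since the curvature term in the evolution of $\tr{\tilde{g}}{g}$ is contracted with $g\approx\tilde{g}$ in a way that mirrors these component bounds, it is in fact harmless. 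Getting this bookkeeping right in (i), and propagating the exponents correctly through (ii)--(iv), is where the real work lies.
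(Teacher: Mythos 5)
Your overall architecture matches the paper's (both follow \cite{TWY}), but the pivotal step, the second-order estimate in (i), is carried out with a test function that does not work. You propose $Q=\log\tr{\tilde{\omega}}{\omega}-A\varphi$ with $A$ a constant, and you dismiss the $Ce^{t/2}$ growth of $|\widetilde{\mathrm{Rm}}|_{\tilde{g}}$ as an artifact of the contraction. That dismissal is both circular (it presupposes $g\approx\tilde{g}$, which is what (i) asserts) and incorrect: the evolution inequality for $\log\tr{\tilde{\omega}}{\omega}$ genuinely contains torsion and curvature contributions of size $Ce^{t/2}$ times trace terms --- compare the paper's inequality (\ref{ee1}), where the $Ce^{t/2}$ persists even \emph{after} (i) is known. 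The good term produced by $-A\varphi$ is only $-A\,\tr{\omega}{\tilde{\omega}}+O(A)$, which cannot absorb terms of size $Ce^{t/2}\tr{\omega}{\tilde{\omega}}$ for large $t$. The paper's quantity is $Q=\log\tr{\tilde{\omega}}{\omega}-Ae^{t/2}\varphi+1/(\tilde{C}+e^{t/2}\varphi)$: the weight $e^{t/2}$ is admissible precisely because Lemma \ref{lemma1}(i) gives $|\varphi|\le C(1+t)e^{-t}$, so $e^{t/2}\varphi$ stays bounded, and it produces the good term $-Ae^{t/2}\tr{\omega}{\tilde{\omega}}$, which dominates the bad $e^{t/2}$ terms (both sides then carry the factor $e^{t/2}$, so the conclusion at the maximum is uniform in $t$); the Phong--Sturm reciprocal term, which you omit entirely, is what controls the signless first-order torsion terms. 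Without both devices the maximum principle does not close.

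The second substantive gap is the upper bound $R\le Ce^{t/2}$ in (ii), which you compress into ``a further maximum-principle argument in which the reference curvature enters.'' In the paper this is the longest part of the proof and requires a chain of intermediate estimates: a parabolic Schwarz lemma for $\tr{\omega}{\alpha}$ (using the locally defined holomorphic map $f$ to $H$ with $\alpha=f^*\omega_P$, since there is no global fibration), the combination (\ref{nice}) producing the good terms $-|\tilde{\nabla}g|^2_g-C_1^{-1}|\nabla\tr{\omega}{\alpha}|^2_g$, a Cheng--Yau gradient estimate $|\nabla u|^2_g\le Ce^{t/2}$ for $u=\varphi+\dot{\varphi}$, and finally the maximum principle applied to $-\Delta u+6|\nabla u|^2_g+C_1(\tr{\omega}{\alpha}+C_0\tr{\tilde{\omega}}{\omega})$. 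None of these appear in your sketch, and $R\le Ce^{t/2}$ does not follow from $|\widetilde{\mathrm{Rm}}|_{\tilde{g}}\le Ce^{t/2}$ by a single maximum principle applied to $R$ itself. Your outlines of (iii) and (iv) are in the right spirit (they mirror the corresponding lemmas of \cite{TWY}), but they take (i) and (ii) as inputs, so the proposal as written does not constitute a proof.
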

\begin{proof}
Since the proof is almost identical to the arguments of \cite{TWY}, given Lemma \ref{lemma1} and Lemma \ref{lemma2}, we give only a brief outline, pointing out the main differences.

To show that the metrics $\omega$ and $\tilde{\omega}$ are uniformly equivalent, we apply the maximum principle to the quantity
$$Q= \log \tr{\tilde{\omega}}{\omega} - Ae^{t/2} \varphi + \frac{1}{\tilde{C} + e^{t/2} \varphi},$$
where $\tilde{C}$ is a uniform constant chosen so that $\tilde{C}+e^{t/2} \varphi \ge 1$, and for $A$ a uniform large constant.  A key point is that by Lemma \ref{lemma1}, the quantity $e^{t/2} \varphi$ is uniformly bounded.  The term $1/(\tilde{C} + e^{t/2} \varphi)$, of Phong-Sturm type \cite{PS}, is added to deal with torsion terms arising in the computation (cf. \cite{TW}).  Arguing as in \cite[Theorem 5.1]{TWY}, choosing $A$ sufficiently large,  $Q$, and hence $\tr{\tilde{\omega}}{\omega}$, is uniformly bounded from above.  Then from part (iii) of Lemma \ref{lemma1}, we obtain
$C^{-1} \tilde{\omega} \le \omega \le C\tilde{\omega}.$

The lower bound $R \ge -C$ follows immediately from the evolution equation
$$\frac{\partial R}{\partial t} = \Delta R + |\textrm{Ric}|^2+R,$$
and the minimum principle.  The upper bound of $R$ requires a number of preliminary estimates, which we now explain.

As in \cite[Lemma 6.2]{TWY}, we have the evolution inequalities
\begin{equation} \label{ee1}
\begin{split}
\left( \ddt{} - \Delta \right) \tr{\tilde{\omega}}{\omega} \le {} & - C^{-1} | \tilde{\nabla} g|^2_g + Ce^{t/2} \\
\left( \ddt{} - \Delta \right) \tr{\omega}{\alpha} \le {} & | \tilde{\nabla} g|^2_g - C^{-1} | \nabla \tr{\omega}{\alpha}|^2_g + Ce^{t/2},
\end{split}
\end{equation}
and it follows that there exist uniform constants $C_0, C_1>0$ such that
\begin{equation} \label{nice}
\left( \ddt{} - \Delta \right) (\tr{\omega}{\alpha} + C_0 \tr{\tilde{\omega}}{\omega}) \le  - | \tilde{\nabla} g|^2_g - C_1^{-1} | \nabla \tr{\omega}{\alpha}|^2_g + Ce^{t/2}.
\end{equation}
Indeed, the first evolution inequality in (\ref{ee1}) follows easily from the evolution equation of $\tr{\tilde{\omega}}{\omega}$.  The second inequality of (\ref{ee1}) is a parabolic Schwarz Lemma argument as in \cite{Ya, ST}.  An important difference to note here is that we do not have a global holomorphic map from $S_M$ to a lower dimensional complex manifold.  However, we do have a \emph{locally defined} holomorphic map $f$ from a holomorphic chart in $S_M$ to the upper half plane $H$ with the property that $\alpha = f^* \omega_P$, for $\omega_P$ the Poincar\'e metric
$$\omega_P =  \frac{\sqrt{-1}}{4y_2^2} dz_2 \wedge d\ov{z}_2,$$
on $H$.  Since the parabolic Schwarz Lemma computation is purely local, we obtain the second inequality of (\ref{ee1}) exactly as in \cite{TWY}.

Now consider the bounded quantity
$$u = \varphi + \dot{\varphi},$$
which has the property that $-\Delta u = R + \tr{\omega}{\alpha} \ge R$.  Our goal is to bound $-\Delta u$ from above by $Ce^{t/2}$.  First,
using a Cheng-Yau \cite{CY} type argument (cf. \cite{SeT, ST}), we apply the maximum principle to
$$Q_1 = \frac{| \nabla u|^2_g}{A-u} + C_1 (\tr{\omega}{\alpha} + C_0 \tr{\tilde{\omega}}{\omega}),$$
for $A$ and $C_1$ chosen sufficiently large, we obtain the estimate
$$| \nabla u|^2_g \le Ce^{t/2},$$
exactly as in \cite[Proposition 6.3]{TWY} (replacing $\omega_S$ with $\alpha$, wherever it occurs).  A straightforward computation then gives
\begin{equation} \label{nablau}
\left( \frac{\partial}{\partial t} - \Delta \right) | \nabla u|^2_g \le - \frac{1}{2} | \nabla \ov{\nabla} u|^2_g - | \nabla \nabla u|^2_g + | \nabla \tr{\omega}{\alpha}|^2_g + | \tilde{\nabla}g |^2_g + Ce^t.
\end{equation}
On the other hand, we have
\begin{equation} \label{Deltau}
\left( \ddt{} - \Delta \right) (- \Delta u) \le  2  | \nabla \ov{\nabla} u|^2_g - \Delta u + Ce^{t/2} + |\tilde{\nabla}g|^2_g - C^{-1} | \nabla \tr{\omega}{\alpha}|^2_g.
\end{equation}
Combining (\ref{nice}), (\ref{nablau}), (\ref{Deltau}), we obtain, for $C_1$ large,
$$\left( \ddt{} - \Delta \right) (-\Delta u + 6| \nabla u|^2_g + C_1 (\tr{\omega}{\alpha} + C_0 \tr{\tilde{\omega}}{\omega})) \le - | \nabla \ov{\nabla} u|^2_g -\Delta u + Ce^t,$$
and it follows from the maximum principle that $-\Delta u \le Ce^{t/2}$, giving the required upper bound for the Chern scalar curvature
$$ R \le Ce^{t/2}.$$

Next we use our bounds on $R$ together with Lemma \ref{lemma1} and
$$\ddt{} \dot{\varphi} = -R -1 -\dot{\varphi},$$
to get, exactly as in \cite[Lemma 6.4]{TWY} (cf. \cite{SW}), the bound (iii) on $\dot{\varphi}$.

For (iv), we first have, as in \cite[Lemma 7.3]{TWY}, that
$$\left( \ddt{}  - \Delta \right) \tr{\omega}{\tilde{\omega}} \le Ce^{t/2} - C^{-1} |\tilde{\nabla} g|^2_g.$$
Using this, and the bounds we have now established for $\varphi$ and $\dot{\varphi}$, we consider the quantities
$$e^{\ve t} (\tr{\omega}{\tilde{\omega}}-2) - e^{\delta t}\varphi, \quad e^{\ve t} (\tr{\tilde{\omega}}{\omega}-2) -e^{\delta' t}\varphi,$$
for $0< \ve<1/4$ and $\delta, \delta'>0$ chosen carefully.  The maximum principle argument of \cite[Proposition 7.3]{TWY} shows that
$$\tr{\omega}{\tilde{\omega}} - 2\le Ce^{-\ve t}, \quad \tr{\tilde{\omega}}{\omega}-2 \le Ce^{-\ve t}.$$
Part (iv) now follows from an elementary argument \cite[Lemma 7.4]{TWY}.
\end{proof}

Given the estimate (iv) of Theorem \ref{thmestimates1}, we can now establish the Gromov-Hausdorff convergence of the metrics $\omega(t)$.

\begin{proof}[Proof of Theorem \ref{maintheorem1} for the Inoue surfaces $S_M$]
We set $\omega_\infty=\alpha$, which represents $-c_1^{\mathrm{BC}}(S_M)$, and by definition $\ti{\omega}(t)\to\alpha$ uniformly and exponentially fast as $t\to\infty$. Thanks to Theorem \ref{thmestimates1} (iv), the same convergence holds for $\omega(t)$, and it remains to determine the Gromov-Hausdorff limit of $(S_M,\omega(t))$.
The argument is a slight modification of \cite[Theorem 5.1]{TW2} (see also \cite[Lemma 9.1]{TWY}).
Let $p:S_M\to S^1$ be the bundle projection map, and denote by $T_y=p^{-1}(y)$  the $T^3$-fiber over $y\in S^1$. Fix $\ve>0$, and let $L_t$ be the length of a curve in $S_M$ measured with respect to $\omega(t)$ and by $d_t$ the induced distance function on $S_M$. Let also $L_\infty, d_\infty$ be the length and distance functions of the degenerate metric $\alpha$ on $S_M$ and let $L,d$ be those of the standard metric on $S^1$ with diameter $(\log\lambda)/\sqrt{2}\pi$. Let $F=p:S_M\to S^1$ and let $G:S^1\to S_M$ be the map defined by sending every point $y\in S^1$ to some chosen point in $S_M$ on the fiber $T_y$. The map $G$ will in general be discontinuous, and it satisfies $F\circ G=\mathrm{Id}$, so
\begin{equation}\label{gh1}
d(y,F(G(y)))=0.
\end{equation}
The limiting semipositive form $\alpha$ has kernel equal to $\mathcal{F}$, and since each leaf of $\mathcal{F}$ is dense in a $T^3$-fiber, we conclude that $d_\infty(x,y)=0$ for all $x,y\in S_M$ with $p(x)=p(y)$. Since $\omega(t)$ converges uniformly to $\alpha$ as $t\to\infty$, we see that for any $x\in S_M$ and for all $t$ large we have
\begin{equation}\label{gh2}
d_t(x,G(F(x)))\leq \ve.
\end{equation}
Next, a simple calculation (see \cite[Theorem 5.1]{TW2}) shows that for any curve $\gamma$ in $S_M$ we have
$$L(F(\gamma))=L_\infty(\gamma).$$
Therefore, given two points $x,y\in S_M$ let $\gamma$ be a minimizing geodesic for the metric $\omega(t)$ joining them, and get
\begin{equation}\label{gh3}
d(F(x),F(y))\leq L(F(\gamma))=L_\infty(\gamma)\leq L_t(\gamma)+\ve=d_t(x,y)+\ve,
\end{equation}
for all $t$ large. Obviously this also implies that
\begin{equation}\label{gh4}
d(x,y)\leq d_t(G(x),G(y))+\ve,
\end{equation}
for all $x,y\in S^1,$ and all $t$ large. Lastly, given $x,y\in S_M$, let $\gamma$ be a minimizing geodesic in $S^1$ connecting $F(x)$ and $F(y)$, and let $\tilde{\gamma}$ be a lift of the curve $\gamma$ starting at $x$, i.e. $\ti{\gamma}$ is a curve in $S_M$ with $F(\ti{\gamma})=\gamma$ and initial point $x$. This lift can always be constructed because $F$ is a fiber bundle. We then concatenate $\ti{\gamma}$ with a curve $\ti{\gamma}_1$ contained in the fiber $T_{F(y)}$ joining the endpoint of $\ti{\gamma}$ with $y$, and obtain a curve $\hat{\gamma}$ in $S_M$ joining $x$ and $y$. By construction we have
\begin{equation}\label{gh5}
d_t(x,y)\leq L_t(\hat{\gamma})=L_t(\ti{\gamma})+L_t(\ti{\gamma}_1)\leq L_\infty(\ti{\gamma})+2\ve = L(\gamma)+2\ve=d(F(x),F(y))+2\ve,
\end{equation}
for all $t$ large. This also implies
\begin{equation}\label{gh6}
d_t(G(x),G(y))\leq d(x,y)+2\ve,
\end{equation}
for all $x,y\in S^1,$ and all $t$ large. Combining \eqref{gh1}, \eqref{gh2}, \eqref{gh3}, \eqref{gh4}, \eqref{gh5}, \eqref{gh6}, we conclude that $(S_M,\omega(t))$ converges to $(S^1,d)$ in the Gromov-Hausdorff sense.
\end{proof}

We now turn to Theorem \ref{mainprop}. Our main claim is that if the initial metric is in the $\de\db$-class of the Tricerri metric $\omega_{\textrm{T}}= 4\alpha + \beta$, then we have the estimate
\begin{equation}\label{c1}
|\nabla_{\mathrm{T}} g |_{g_{\mathrm{T}}}\leq C,
\end{equation}
for all $t$ large, where $\nabla_{\mathrm{T}}$ is the covariant derivative of $g_{\mathrm{T}}$. Clearly, given the results proved in Theorem \ref{maintheorem1}, we see that Theorem \ref{mainprop} follows immediately from \eqref{c1}.

To prove \eqref{c1} the first step is the following improvement of Lemma \ref{lemma2}.  Here, $\omega_{\mathrm{T}}$ plays the role of $\of$.  In particular, $\alpha \wedge \omega_{\mathrm{T}}  = \alpha \wedge \beta$, so that the analogue of (\ref{cp}) holds with $c=1$.  We have
$$\ti{\omega}=e^{-t}\omega_{\mathrm{T}}+(1-e^{-t})\alpha.$$
These are completely explicit Hermitian metrics on $S_M$.  They
 have the property that $\omega=\ti{\omega}+\ddbar\vp$, where $\varphi$ solves  (\ref{pma}) (with $\Omega = 2\alpha \wedge \beta$).

\pagebreak[3]
\begin{lemma} \label{lemma2bis}
There exists a uniform constant $C$ such that
\begin{enumerate}
\item[(i)] $\displaystyle{| \tilde{T}|_{\tilde{g}} \le C}$.
\item[(ii)] $\displaystyle{ | \ov{\partial} \tilde{T} |_{\tilde{g}}+ | \tilde{\nabla} \tilde{T}|_{\tilde{g}} + | \widetilde{\emph{Rm}}|_{\tilde{g}} \le C}$.
\item[(iii)] $\displaystyle{ \ti{\nabla}\widetilde{\emph{Rm}}=0, \ov{\ti{\nabla}} \ov{\ti{\nabla}} \tilde{T}=0,  \tilde{\nabla} \ov{\ti{\nabla}}\tilde{T}=0}$.
\end{enumerate}
\end{lemma}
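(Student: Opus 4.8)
The plan is to take full advantage of the fact that in the Tricerri case every reference quantity is completely explicit. From $\of=\omega_{\mathrm{T}}=4\alpha+\beta$ together with (\ref{omegahat}) we have $\ti\omega=(1+3e^{-t})\alpha+e^{-t}\beta$, so that in the coordinates $z_1,z_2$ the metric $\ti g$ is diagonal,
\[
\ti g_{1\ov 1}=e^{-t}y_2,\qquad \ti g_{2\ov 2}=\frac{1+3e^{-t}}{4y_2^2},\qquad \ti g_{1\ov 2}=0,
\]
with all entries independent of $z_1$. The crucial first step is to compute the Chern connection of $\ti g$ and to observe that its Christoffel symbols are independent of $t$. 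Indeed, for a diagonal metric the nonzero Christoffel symbols are logarithmic derivatives $\ti\Gamma^k_{ik}=\de_i\log\ti g_{k\ov k}$ of the diagonal entries, so the spatially constant factors $e^{-t}$ and $1+3e^{-t}$ drop out; explicitly only $\ti\Gamma^1_{21}=\de_2\log y_2$ and $\ti\Gamma^2_{22}=-2\de_2\log y_2$ survive, both free of $t$. It follows that the torsion with a raised index, $\ti T^1_{21}=\ti\Gamma^1_{21}$, and the curvature endomorphism $\ti R^{k}{}_{\ell i\ov j}=-\de_{\ov j}\ti\Gamma^k_{i\ell}$, whose only nonzero components are $\ti R^{1}{}_{1\,2\ov 2}$ and $\ti R^{2}{}_{2\,2\ov 2}$, are likewise independent of $t$.

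Given these explicit formulas, parts (i) and (ii) reduce to measuring norms in $\ti g$. The mechanism is that every nonzero component carries a balanced set of $z_1$-indices, so each index-raising factor $\ti g^{1\ov 1}=O(e^t)$ is matched by a factor $\ti g_{1\ov 1}=O(e^{-t})$, while $\ti g_{2\ov 2}$ and $\ti g^{2\ov 2}$ are bounded above and below; the net result is that $|\ti T|_{\ti g}$, $|\ov{\partial}\ti T|_{\ti g}$, $|\ti\nabla\ti T|_{\ti g}$ and $|\widetilde{\mathrm{Rm}}|_{\ti g}$ are all $O(1)$. This is precisely where the improvement over Lemma \ref{lemma2} arises: in the general setting the freedom in $(\glf)_{1\ov 2}$ and $(\glf)_{2\ov 2}$ produced off-diagonal and $z_1$-dependent terms, which were responsible for the $e^{t/2}$ growth, whereas here diagonality and $z_1$-independence eliminate them entirely.

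For part (iii) I would compute the covariant derivatives directly, using that $\ti\nabla$ corrects only holomorphic indices, through the two Christoffel symbols above, and $\ov{\ti\nabla}$ only anti-holomorphic ones, through their conjugates. Each identity then collapses to a short scalar check in which the Christoffel contributions cancel in pairs; for instance $\ti\nabla_2\ti R^{1}{}_{1\,2\ov 2}=\de_2\ti R^{1}{}_{1\,2\ov 2}-\ti\Gamma^2_{22}\ti R^{1}{}_{1\,2\ov 2}$, and both remaining terms equal $\frac{\mn}{4y_2^3}$, so the sum vanishes; the torsion identities $\ov{\ti\nabla}\,\ov{\ti\nabla}\ti T=0$ and $\ti\nabla\,\ov{\ti\nabla}\ti T=0$ are verified in the same way. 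The conceptual reason behind these vanishings is that $\ti g$ is invariant under the transitive action of the solvable Lie group uniformizing $S_M$, so its curvature and torsion are parallel. I expect the main obstacle to be the bookkeeping of these mixed holomorphic/anti-holomorphic covariant derivatives rather than any analytic difficulty, but since only two Christoffel symbols and a handful of tensor components are nonzero, each of the three identities is a finite and essentially mechanical verification.
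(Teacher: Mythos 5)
Your proposal is correct and follows essentially the same route as the paper: write out the explicit diagonal components of $\tilde g$, observe that the only nonzero Christoffel symbols are the two $t$-independent quantities $\tilde\Gamma^1_{21}=-\frac{\sqrt{-1}}{2y_2}$ and $\tilde\Gamma^2_{22}=\frac{\sqrt{-1}}{y_2}$, and then verify (i)--(iii) by direct computation, the $e^{\pm t}$ factors cancelling in every $\tilde g$-norm exactly as you describe. One caveat about your closing aside: homogeneity under the transitive group action does not imply that curvature and torsion are \emph{parallel} (it only forces their covariant derivatives to have constant norm), and indeed $\tilde\nabla\tilde T$ and $\overline{\partial}\tilde T$ are nonzero here --- they are merely bounded, per part (ii) --- so the specific vanishings in (iii) really do have to be checked by the direct computation you propose rather than deduced from invariance.
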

\begin{proof}
Part (i) was proved in Lemma \ref{lemma2}. The rest of the lemma consists of straightforward, if somewhat tedious, calculations.
Using the holomorphic coordinates as described earlier, we have
\[\tilde{g}_{1\ov{1}} =  e^{-t} y_2, \quad  \tilde{g}_{1\ov{2}} = 0, \quad \tilde{g}_{2\ov{2}} =  \frac{1+3e^{-t}}{4y_2^2},\]
\[\ti{g}^{1\ov{1}}=\frac{e^t}{y_2}, \quad \tilde{g}^{1\ov{2}}=0,\quad \ti{g}^{2\ov{2}}=\frac{4y_2^2}{1+3e^{ -t}}.\]
From this it follows that the only nonzero Christoffel symbols are
\begin{equation}\label{christ}
\ti{\Gamma}_{21}^1=-\frac{\mn}{2y_2},\quad \ti{\Gamma}_{22}^2=\frac{\mn}{y_2},
\end{equation}
and note that these do not depend on $t$.
The only nonzero components of the torsion $\tilde{T}$ and its  derivative $\db\ti{T}$ are
$$\ti{T}_{12}^1=-\ti{T}_{21}^1=\frac{\mn}{2y_2}, \quad \de_{\ov{2}} \ti{T}_{12}^1=-\de_{\ov{2}} \ti{T}_{21}^1=\frac{1}{4y_2^2}.$$
It follows that  $| \ov{\partial} \tilde{T} |_{\tilde{g}}\leq C$. The only nonzero components of $\ti{\nabla}\ti{T}$ are
$$\ti{\nabla}_{2}\ti{T}_{12}^1=-\ti{\nabla}_{2}\ti{T}_{21}^1=\frac{1}{4y_2^2},$$
and $| \tilde{\nabla} \tilde{T}|_{\tilde{g}}\leq C$ follows. The only nonzero components of $\widetilde{\mathrm{Rm}}$ are
$$\ti{R}_{2\ov{2}2\ov{2}}=-\frac{1+3e^{-t}}{8y_2^4}, \quad \ti{R}_{2\ov{2}1\ov{1}}=\frac{e^{-t}}{4y_2},$$
and from this we obtain $| \widetilde{\mathrm{Rm}}|_{\tilde{g}} \leq C$, finishing the proof of (ii).

For (iii), the only terms which are not immediately seen to be zero are:
$$\ti{\nabla}_2 \ti{R}_{2\ov{2}2\ov{2}}=\de_2 \ti{R}_{2\ov{2}2\ov{2}} -2\ti{\Gamma}_{22}^2 \ti{R}_{2\ov{2}2\ov{2}}=-2\mn \frac{1+3e^{-t}}{8y_2^5}+2\frac{\mn}{y_2}\cdot\frac{1+3e^{-t}}{8y_2^4}=0,$$
$$\ti{\nabla}_2\ti{R}_{2\ov{2}1\ov{1}}=\de_2 \ti{R}_{2\ov{2}1\ov{1}}-\ti{\Gamma}_{22}^2\ti{R}_{2\ov{2}1\ov{1}}-\ti{\Gamma}_{21}^1\ti{R}_{2\ov{2}1\ov{1}}=
\frac{\mn e^{-t}}{8y_2^2}-\frac{\mn e^{-t}}{8y_2^2}=0,$$
$$\ti{\nabla}_{\ov{2}}\ti{\nabla}_{\ov{2}}\ti{T}_{12}^1=\de_{\ov{2}}\de_{\ov{2}}\ti{T}_{12}^1-\ov{\ti{\Gamma}_{22}^2}\de_{\ov{2}}\ti{T}_{12}^1=-\frac{\mn}{4y_2^3}+\frac{\mn}{4y_2^3}=0,$$
$$\ti{\nabla}_{2}\ti{\nabla}_{\ov{2}}\ti{T}_{12}^1=\de_{2}\de_{\ov{2}}\ti{T}_{12}^1-\ti{\Gamma}_{21}^1\de_{\ov{2}}\ti{T}_{12}^1-\ti{\Gamma}_{22}^2\de_{\ov{2}}\ti{T}_{12}^1+\ti{\Gamma}^1_{21}\de_{\ov{2}}\ti{T}_{12}^1=
\frac{\mn}{4y_2^3}-\frac{\mn}{4y_2^3}=0,$$
and this completes the proof of the lemma.
\end{proof}

The estimates of Theorem \ref{thmestimates1} imply in particular that the metrics $\tilde{g}$ and $g$ are uniformly equivalent.  The key additional estimate we obtain in this case is the following Calabi-type ``third order'' estimate.

\begin{proposition}\label{cal}
We have
\begin{equation}\label{c11}
|\ti{\nabla} g |_{\ti{g}}\leq C.
\end{equation}
\end{proposition}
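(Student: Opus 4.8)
The plan is to run a parabolic Calabi-type ``third order'' argument, controlling the quantity
\[
S := |\ti{\nabla} g|^2_{\ti{g}}
\]
by the maximum principle. By Theorem \ref{thmestimates1}(i) the metrics $g$ and $\ti{g}$ are uniformly equivalent, so up to uniform constants I may freely measure tensors in either metric; it is convenient to introduce $\Psi := \nabla - \ti{\nabla}$, the difference of the two Chern connections, a tensor with $\Psi^p_{ik} = g^{p\ov{q}}\ti{\nabla}_i g_{k\ov{q}}$, so that $S$ is comparable to $|\Psi|^2_g$. The goal is to show $S \le C$ uniformly, which is exactly \eqref{c11}.

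First I would compute the evolution of $S$ along the flow. A standard computation, carried out as in \cite{TWY} (see also \cite{TW}), gives schematically
\[
\left(\ddt{} - \Delta\right) S \le -|\nabla \Psi|^2_g - |\ov{\nabla}\Psi|^2_g + (\ti{\nabla}g)*(\ti{\nabla}g)*\left(\widetilde{\mathrm{Rm}} + \ti{T}*\ti{T} + \ov{\partial}\ti{T} + \ti{\nabla}\ti{T}\right) + (\ti{\nabla}g)*\left(\ti{\nabla}\widetilde{\mathrm{Rm}} + \ov{\ti{\nabla}}\,\ov{\ti{\nabla}}\ti{T} + \ti{\nabla}\ov{\ti{\nabla}}\ti{T}\right),
\]
where $\Delta = \Delta_g$ is the Chern Laplacian of the evolving metric and $*$ denotes a contraction with $g$ or $\ti{g}$. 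The curvature $\mathrm{Rm}(g)$ of the evolving metric that appears in the computation is rewritten through an identity of the form $\mathrm{Rm}(g) = \widetilde{\mathrm{Rm}} + \ti{\nabla}\Psi + \Psi*\Psi$, so that it contributes only the bounded term $\widetilde{\mathrm{Rm}}$, the quadratic term $\Psi*\Psi \sim S$, and derivative terms $\ti{\nabla}\Psi$ absorbed into the good negative terms. The decisive input is Lemma \ref{lemma2bis}: part (iii) forces the entire last group of terms to vanish identically, while part (ii) bounds $\widetilde{\mathrm{Rm}}, \ti{T}, \ov{\partial}\ti{T}, \ti{\nabla}\ti{T}$ by a uniform constant (\emph{not} merely by $Ce^{t/2}$). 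Combining these facts yields
\[
\left(\ddt{} - \Delta\right) S \le -\tfrac{1}{2}|\nabla \Psi|^2_g - \tfrac{1}{2}|\ov{\nabla}\Psi|^2_g + C S + C.
\]

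Next I would extract a good negative term from the trace. As in the first inequality of \eqref{ee1} — now with a uniform constant in place of $Ce^{t/2}$, precisely because Lemma \ref{lemma2bis}(ii) gives uniform torsion and curvature bounds — one has
\[
\left(\ddt{} - \Delta\right)\tr{\ti{\omega}}{\omega} \le -C_2^{-1}S + C.
\]
Setting $Q := S + A\,\tr{\ti{\omega}}{\omega}$ with $A$ chosen large enough that $A\,C_2^{-1} - C \ge 1$ gives
\[
\left(\ddt{} - \Delta\right) Q \le -S + C.
\]
At a space-time point where $Q$ attains its maximum over $S_M \times [0,T]$, either the maximum occurs at $t=0$ (and $Q$ is bounded by the initial data) or, at an interior maximum, the maximum principle gives $0 \le -S + C$, hence $S \le C$ there; since $\tr{\ti{\omega}}{\omega}$ is uniformly bounded by Theorem \ref{thmestimates1}(i), $Q$ is bounded at that point and therefore everywhere, so $S = |\ti{\nabla} g|^2_{\ti{g}} \le C$, which is \eqref{c11}.

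The main obstacle is the treatment of the reference-metric terms in the evolution of $S$. In the general collapsing situation the covariant derivatives $\ti{\nabla}\widetilde{\mathrm{Rm}}$ and the second covariant derivatives of $\ti{T}$ do not vanish and, because $\ti{g}$ degenerates in the leaf direction $z_1$, could only be controlled by quantities growing like $e^{t/2}$ (compare the bounds in Lemma \ref{lemma2}); this would at best yield $S \le Ce^{t/2}$. What makes a \emph{uniform} bound possible is exactly the package of vanishing identities in Lemma \ref{lemma2bis}(iii), special to the explicit Tricerri reference geometry, which annihilates those dangerous terms. The remaining technical care lies in the bookkeeping of which contractions in the Calabi computation produce the good negative terms versus the harmless $\Psi*\Psi$ and bounded-coefficient terms, but this follows the template of \cite{TWY}.
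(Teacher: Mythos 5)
Your proposal is correct and follows essentially the same route as the paper, which runs the Calabi-type estimate of \cite{ShW} and \cite[Section 8]{TWY} on $S=|\Psi|_g^2$ and invokes Lemma \ref{lemma2bis} to control the reference-geometry terms; you simply spell out the maximum-principle step with $Q=S+A\,\tr{\ti{\omega}}{\omega}$ in more detail. The one point you leave implicit, which the paper flags explicitly, is that the $t$-dependence of $\ti{g}$ contributes no extra term of the form $(\partial_t \ti{\Gamma})*\Psi$ to the evolution of $S$, because by \eqref{christ} the Christoffel symbols $\ti{\Gamma}$ are independent of $t$ for this family of reference metrics (this is precisely the term that must be handled separately, via \eqref{newguy}, in the $S^+$ case).
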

\begin{proof}
This is analogous to the Calabi-type estimates established in \cite{ShW}, \cite[Section 8]{TWY} (see also \cite{PSS}). The result that we need is not exactly contained in \cite{ShW}, but the adjustments needed are minimal, so we will only indicate what changes in this case.  We bound the quantity $S := |\ti{\nabla} g|_{g}^2=| \Psi|_g^2$, where $\Psi = \Gamma - \tilde{\Gamma}$.  The quantity $S$ is equivalent to
$|\ti{\nabla} g |^2_{\ti{g}}$.

First, compared to the setup in \cite{ShW}, the Chern-Ricci flow that we consider now is normalized.  However, this only introduces new negligible terms. Second, the reference metrics $\ti{g}$ now depend on $t$, while the reference metric $\hat{g}$ in \cite{ShW} is fixed.  But the Christoffel symbols  $\tilde{\Gamma}$ are independent of $t$ and so this does not introduce any new terms in the evolution of $S$.

The time dependence of $\ti{g}$ \emph{does} introduce a
new term in the evolution of $\tr{\ti{\omega}}{\omega}$, but this is  easily seen to be harmless (cf. \cite[(6.1)]{TWY}). Thanks to \cite[Remark 3.1]{ShW}, the bounds proved in Lemma \ref{lemma2bis} are then sufficient to bound $S$ and thus obtain \eqref{c11}.
\end{proof}

\begin{proof}[Proof of Theorem \ref{mainprop} for the Inoue surfaces $S_M$]
We can now complete the proof of \eqref{c1}, which implies Theorem \ref{mainprop}.
The first observation is that the Christoffel symbols $\ti{\Gamma}$ of $\ti{g}$, which we computed in \eqref{christ}, do not depend on $t$, and so
the covariant derivative $\ti{\nabla}$ with respect to the metric $\ti{g}$ equals the covariant derivative $\nabla_{\mathrm{T}}$ with respect to $\ti{g}|_{t=0}=g_{\mathrm{T}}$.
The second observation is that $\ti{g}\leq Cg_{\mathrm{T}}$ for a uniform constant $C$. Therefore \eqref{c1} follows immediately from \eqref{c11}.
\end{proof}

\section{The Inoue surfaces $S^+$ and $S^-$}\label{sectionSPM}

We recall now the construction of the Inoue surfaces $S^+$ from \cite{In} (see also \cite[Section 6]{TW2}).  Consider $N=(n_{ij}) \in \textrm{SL}(2, \mathbb{Z})$ with two real eigenvalues $\alpha, 1/\alpha$ with $\alpha>1$, and let $(a_1, a_2)$ and $(b_1, b_2)$ be real eigenvectors corresponding to $\alpha$ and $1/\alpha$ respectively.

Fix integers $p,q,r$, with $r\neq 0$, and a complex number ${\bf t}$.
With this data, let $(c_1,c_2)\in \mathbb{R}^2$ solve
the linear equation
$$(c_1,c_2)=(c_1,c_2)\cdot N^{t} + (e_1,e_2) + \frac{b_1a_2-b_2a_1}{r}(p,q),$$
where
$$e_i=\frac{1}{2} n_{i1}(n_{i1}-1)a_1b_1 + \frac{1}{2} n_{i2}(n_{i2}-1)a_2b_2 + n_{i1}n_{i2} b_1a_2,
\quad i=1,2.$$
Now let $\Gamma$ be the group of automorphisms of $\mathbb{C} \times H$ generated by
$$f_0(z_1,z_2)= (z_1 + {\bf t}, \alpha z_2),$$
$$f_j(z_1,z_2)= (z_1 + b_j z_2 + c_j, z_2+a_j), \quad \textrm{for } j=1,2,$$
$$\textrm{and} \quad f_3(z_1,z_2)=  \left(z_1+\frac{b_1a_2-b_2a_1}{r}, z_2\right).$$
The group $\Gamma$ acts properly discontinuously on $\mathbb{C} \times H$ with compact quotient $S^{+}=(\mathbb{C}\times H)/\Gamma$.
For each such $N, p,q,r$ and $\mathbf{t}$,  $S^{+}$ is an Inoue surface.  As in Section \ref{sectionSM}, we may work in a local holomorphic coordinate chart with $z_1$ and $z_2$  uniformly bounded, and  $y_2$ uniformly bounded below away from zero.

We will not give the precise definition of Inoue surfaces of type $S^{-}$ (see e.g. \cite{In, TW2}) because their only property that we will need is that every such surface has an unramified double cover which is an Inoue surface of type $S^{+}$, and we can easily derive the statements of our theorems on $S^-$ from the corresponding statements on $S^+$.

Therefore, going back to $S^+$, since $\alpha>1$, we can write $\mathrm{Im}{\bf t}=m\log \alpha$ for some $m\in\mathbb{R}$. Then the  $(1,0)$ forms on $H\times\mathbb{C}$,
$$\frac{1}{y_2}dz_2,\quad dz_1 -\frac{y_1-m\log y_2}{y_2}dz_2,$$
where $z_1=x_1+\mn y_1$ and  $z_2=x_2+\mn y_2$, are invariant under the $\Gamma$-action, and so descend to $S^{+}$.  Define $(1,1)$ forms $\alpha', \gamma$ on $S^+$ by
$$\alpha' = \frac{1}{2y_2^2} \sqrt{-1} dz_2 \wedge d\ov{z}_2,$$
$$\gamma = \sqrt{-1} \left( dz_1 -\frac{y_1-m\log y_2}{y_2}dz_2 \right) \wedge \left( d\ov{z}_1 -\frac{y_1-m\log y_2}{y_2}d\ov{z}_2 \right).$$
Then
$$\omega_{\mathrm{V}} =  2\alpha' + \gamma$$
is a Hermitian metric, originally introduced by Vaisman \cite{Va} (and Tricerri in the case $m=0$ \cite{Tr}).  Moreover,
$$\textrm{Ric}(\omega_{\mathrm{V}}) = - \alpha' \in c_1^{\textrm{BC}}(S^+).$$
It is shown in \cite{TW} that the solution of the normalized Chern-Ricci flow starting at $\omega_{\mathrm{V}}$ is given by
$$\omega(t) = e^{-t} \gamma + (1+e^{-t})\alpha' \longrightarrow \alpha', \quad \textrm{as } t \rightarrow \infty.$$

Now,  as in Lemma \ref{flatleaves}, we see that a metric $\of$ on $S^+$ is strongly flat along the leaves if and only if
\begin{equation} \label{alphapwb}
\alpha' \wedge \of = c \alpha' \wedge \gamma,
\end{equation}
for some positive constant $c$.   Indeed, this easily follows from the fact that in the coordinates $z_1, z_2$ above, the condition (\ref{alphapwb}) is equivalent to
\begin{equation} \label{glf2}
(\glf)_{1\ov{1}} = c.
\end{equation}
Moreover, given any Hermitian metric $\omega$ on $S^+$, we can produce $\of = e^{\sigma} \omega$ satisfying this condition, for suitable choice of $\sigma$.

Given this set-up, and what we have already proved, it is straightforward to complete the proof of Gromov-Hausdorff convergence for the manifolds $S^+$.

\begin{proof}[Proof of Theorem \ref{maintheorem1} for the Inoue surfaces $S^+$ and $S^-$]  The proof for the surfaces $S^+$ is almost identical to the case of $S_M$.  Indeed, we simply replace $\alpha$ by $\alpha'$ and note that the condition (\ref{glf}) has been replaced by the (even simpler) condition (\ref{glf2}).  The same estimates as in Lemma \ref{lemma2} hold almost verbatim, and similarly for Theorem \ref{thmestimates1}.  To obtain the Gromov-Hausdorff convergence to $(S^1, d)$, we apply the same argument as in the proof of Theorem \ref{maintheorem1}, the only difference being that now $S^+$ is a fiber bundle over $S^1$ with fiber a certain compact $3$-manifold (not a torus), but the leaves of $\mathcal{F}$ are still dense in these fibers (see e.g. \cite[Lemma 6.2]{TW2}), and the argument goes through as before.

Finally, the result for $S^-$ follows from the fact every such surface has an unramified double cover which is an Inoue surface of type $S^+$ (cf. \cite[Section 7]{TW2}).
\end{proof}

We finally discuss Theorem \ref{mainprop} for the Inoue surfaces $S^+$ and $S^-$. As before, the case of $S^-$ is immediately reduced to the case of $S^+$. The proof for the surfaces $S^+$ is similar to the case of $S_M$, but there are some differences. We now assume that the initial metric is in the $\de\db$-class of the Vaisman metric $\omega_{\textrm{V}}= 2\alpha' + \gamma$, and our goal is to prove the estimate
\begin{equation}\label{c1new}
|\nabla_{\mathrm{V}} g |_{g_{\mathrm{V}}}\leq C,
\end{equation}
for all $t$ large, where $\nabla_{\mathrm{V}}$ is the covariant derivative of $g_{\mathrm{V}}$. Again, this immediately implies the conclusion of Theorem \ref{mainprop} in this setting.
As before, we have the explicit reference metrics on $S^+$
$$\ti{\omega}=e^{-t}\omega_{\mathrm{V}}+(1-e^{-t})\alpha'.$$
The analog of Lemma \ref{lemma2bis} is now:
\begin{lemma} \label{lemma2ter}
There exists a uniform constant $C$ such that
\begin{enumerate}
\item[(i)] $\displaystyle{| \tilde{T}|_{\tilde{g}} \le C}$.
\item[(ii)] $\displaystyle{ | \ov{\partial} \tilde{T} |_{\tilde{g}}+ | \tilde{\nabla} \tilde{T}|_{\tilde{g}} + | \widetilde{\emph{Rm}}|_{\tilde{g}} \le C}$.
\item[(iii)] $\displaystyle{ |\ti{\nabla}\widetilde{\emph{Rm}}|_{\ti{g}}+|\ov{\ti{\nabla}}\ov{ \ti{\nabla}} \tilde{T}|_{\ti{g}}+|\tilde{\nabla} \ov{\ti{\nabla}}\tilde{T}|_{\ti{g}}\leq C}$.
\end{enumerate}
\end{lemma}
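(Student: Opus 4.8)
The plan is to prove Lemma~\ref{lemma2ter} by the same method as Lemma~\ref{lemma2bis}: a direct, explicit computation of the torsion, curvature and their covariant derivatives for the reference metric $\ti\omega=e^{-t}\gamma+(1+e^{-t})\alpha'$. Writing $w:=\frac{y_1-m\log y_2}{y_2}$, one first records
\[
\ti g_{1\ov 1}=e^{-t},\qquad \ti g_{1\ov 2}=-e^{-t}w,\qquad \ti g_{2\ov 2}=e^{-t}w^2+(1+e^{-t})\tfrac{1}{2y_2^{2}},
\]
whose inverse has $\ti g^{1\ov 1}$ of order $e^{t}$ while $\ti g^{1\ov 2}$ and $\ti g^{2\ov 2}$ are bounded. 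Since we work in a fixed fundamental domain, $y_2$ is bounded above and below away from zero, and $w$ together with all of its derivatives is bounded; this boundedness plays the role that the explicit dependence on $y_2$ played in Lemma~\ref{lemma2bis}. Part~(i) is immediate: as $\alpha'$ is $d$-closed we have $\ti T_{ij\ov\ell}=e^{-t}(T_{\mathrm V})_{ij\ov\ell}$ for $(T_{\mathrm V})_{ij\ov\ell}$ the torsion of $\omega_{\mathrm V}$, and the argument of Lemma~\ref{lemma2}(i) applies verbatim.

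The computation of (ii) and (iii) is organized most cleanly in the $\Gamma$-invariant coframe $\theta^1=dz_1-w\,dz_2$, $\theta^2=\frac{1}{y_2}dz_2$, in which $\ti\omega=\mn\big(e^{-t}\theta^1\wedge\ov{\theta^1}+\tfrac{1+e^{-t}}{2}\,\theta^2\wedge\ov{\theta^2}\big)$ is diagonal with coefficients depending only on $t$. A short calculation gives the structure equations
\[
d\theta^1=\tfrac{\mn}{2}\theta^1\wedge\theta^2-\tfrac{\mn}{2}\ov{\theta^1}\wedge\theta^2-\tfrac{\mn}{2}\,m\,\theta^2\wedge\ov{\theta^2},\qquad d\theta^2=\tfrac{\mn}{2}\theta^2\wedge\ov{\theta^2},
\]
in which the $w$- and $y_2$-dependence has cancelled, leaving \emph{constant} coefficients. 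Consequently the Chern connection forms, $\ti T$, $\db\ti T$, $\ti\nabla\ti T$, $\widetilde{\mathrm{Rm}}$ and the three third-order quantities in (iii), expressed in this coframe, all have coefficients that are bounded smooth functions of $e^{-t}$ built from these constant structure constants and $m$, multiplied by a definite power of $e^{-t}$ fixed by the index type. In particular they are uniformly bounded as functions on $S^+$, and taking norms with $\ti g$ reduces to a purely $t$-dependent calculation.

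The main point, and the essential difference from $S_M$, is that here the quantities in (iii) are merely \emph{bounded}, not identically zero. The term $m\log y_2$ inside $w$, together with the off-diagonal part of $\gamma$, produces the genuine contributions $-\tfrac{\mn}{2}\ov{\theta^1}\wedge\theta^2$ and $-\tfrac{\mn}{2}m\,\theta^2\wedge\ov{\theta^2}$ in $d\theta^1$, and hence nonvanishing $\ti\nabla\widetilde{\mathrm{Rm}}$, $\ov{\ti\nabla}\,\ov{\ti\nabla}\ti T$ and $\ti\nabla\ov{\ti\nabla}\ti T$; the exact cancellations found for the Tricerri metric on $S_M$ no longer occur. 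The main obstacle is therefore the bookkeeping of the $e^{\pm t}$ scalings: each index equal to $1$ is raised in the $\ti g$-norm by $\ti g^{1\ov 1}$, which is of order $e^{t}$, so one must check, term by term, that every component carries enough compensating powers of $e^{-t}$ for no net growth to survive. Passing to the invariant coframe removes the spatial ($w$, $y_2$) dependence and reduces this to a finite, purely $t$-dependent check, exactly parallel to the one behind Lemma~\ref{lemma2bis}, and it yields the uniform bound $C$ in (ii) and (iii). Finally, this bounded version of (iii) is precisely what is required to run the Calabi-type third-order estimate, the analogue of Proposition~\ref{cal}, via \cite[Remark~3.1]{ShW}, and so to obtain \eqref{c1new} and complete Theorem~\ref{mainprop} for $S^+$ (and hence for $S^-$).
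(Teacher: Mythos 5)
Your proposal is correct, and it reaches the bounds of Lemma \ref{lemma2ter} by a genuinely different organization of the computation than the paper. The paper works throughout in the holomorphic coordinates $z_1,z_2$, writes out the components $\tilde{g}_{i\ov{j}}$, $\tilde{\Gamma}^k_{ij}$, $\tilde{T}^k_{ij}$, $\tilde{R}_{i\ov{j}k}^{\ \ \ \ell}$ explicitly as functions of $y_1,y_2,t$, and then records, term by term, which components are $O(1)$, $O(e^{-t})$ or $O(e^{-2t})$ before contracting with $\tilde{g}^{1\ov{1}}=O(e^t)$; this is the ``long, direct calculation'' the authors allude to, and it is where all the $y_2$- and $(y_1-m\log y_2)$-dependence has to be tracked by hand. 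You instead pass to the $\Gamma$-invariant coframe $\theta^1=dz_1-w\,dz_2$, $\theta^2=y_2^{-1}dz_2$, in which $\tilde\omega$ is diagonal with purely $t$-dependent entries and the structure equations have constant coefficients (your formulas for $d\theta^1$ and $d\theta^2$ check out). Since the Chern connection, torsion, curvature and their covariant derivatives are then all left-invariant tensors, their coframe components are constants in the spatial variables, and the norm estimates collapse to a finite algebraic check in $e^{\pm t}$ --- this is exactly the homogeneous-metric point of view of \cite{L,LR} that the paper cites in the introduction but does not exploit in this proof. What your route buys is the elimination of all spatial dependence and a cleaner explanation of \emph{why} the exponents balance; what it costs is that the coframe is not holomorphic, so you must compute the Chern connection via the frame/structure-equation formalism rather than the coordinate formulas $\tilde\Gamma^k_{ij}=\tilde g^{k\ov{\ell}}\partial_i\tilde g_{j\ov{\ell}}$, a standard but nontrivial bookkeeping point you should make explicit. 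Like the paper, you do not display the final term-by-term verification of the $e^{\pm t}$ powers, but the reduction to a constant-coefficient check makes that verification finite and routine, and you correctly identify the essential difference from the $S_M$ case, namely that the third-order quantities in (iii) are merely bounded rather than identically zero.
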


\begin{proof}
Again, item (i) was proved in Lemma \ref{lemma2}. The rest of the lemma follows from a long, direct calculation. The situation is considerably more complicated than in Lemma \ref{lemma2bis} because now very few terms vanish.

Using the holomorphic coordinates as described earlier, we have
\[\tilde{g}_{1\ov{1}} =  e^{-t}, \quad  \tilde{g}_{1\ov{2}} = -e^{-t}\frac{y_1-m\log y_2}{y_2}, \quad \tilde{g}_{2\ov{2}} =  \frac{1+e^{-t}+2e^{-t}(y_1-m\log y_2)^2}{2y_2^2},\]
\[\ti{g}^{1\ov{1}}=O(e^t), \quad \tilde{g}^{1\ov{2}}=O(1),\quad \ti{g}^{2\ov{2}}=O(1).\]
None of the Christoffel symbols of $\ti{g}$ vanishes, but their components in these coordinates are readily seen to be uniformly bounded as $t$ approaches infinity, which shows that
\begin{equation}\label{c13}
|\ti{\Gamma}-\Gamma_{\mathrm{V}}|_{g_{\mathrm{V}}}\leq C,
\end{equation}
where $\Gamma_{\mathrm{V}}$ are the Christoffel symbols of $g_{\mathrm{V}}$.
For later use, we mention explicitly that
\begin{equation}\label{christ2}
\begin{split}
\ti{\Gamma}_{11}^2&=\mn\frac{y_2}{1+e^t}=O(e^{-t}),\\
\ti{\Gamma}_{12}^2&=-\mn\frac{y_1-m\log y_2}{1+e^t}=O(e^{-t}),\\
\ti{\Gamma}_{21}^2&=-\mn\frac{m}{1+e^t}-\mn\frac{y_1-m\log y_2}{1+e^t}=O(e^{-t}).
\end{split}
\end{equation}
The torsion of $\ti{g}$ is given by
$$\ti{T}_{12}^1=-\ti{T}_{21}^1=\frac{\mn}{2y_2}+\mn\frac{m(y_1-m\log y_2)}{y_2(1+e^t)},$$
$$\ti{T}_{12}^2=-\ti{T}_{21}^2=\mn\frac{m}{1+e^t},$$
and the only nonzero components of $\db\ti{T}$ are
$$\de_{\ov{1}} \ti{T}_{12}^1=-\de_{\ov{1}} \ti{T}_{21}^1=-\frac{m}{2y_2(1+e^t)},$$
$$\de_{\ov{2}} \ti{T}_{12}^1=-\de_{\ov{2}} \ti{T}_{21}^1=\frac{1}{4y_2^2}+\frac{m(y_1-m\log y_2)+m^2}{2y_2^2(1+e^t)},$$
and $| \ov{\partial} \tilde{T} |_{\tilde{g}}\leq C$ follows easily. Next, by direct calculation, we have
$$\ti{\nabla}_{1}\ti{T}_{12}^1=O(e^{-t}),\quad \ti{\nabla}_{2}\ti{T}_{12}^1=O(1),\quad \ti{\nabla}_{1}\ti{T}_{12}^2=O(e^{-t}),\quad \ti{\nabla}_{2}\ti{T}_{12}^2=O(e^{-t}),$$
and $| \tilde{\nabla} \tilde{T}|_{\tilde{g}}\leq C$ follows.
To bound $|\widetilde{\mathrm{Rm}}|_{\ti{g}}$ we recall that
$\ti{R}_{i\ov{j}k}^{\ \ \ \ell}=-\de_{\ov{j}}\ti{\Gamma}_{ik}^\ell$.
Direct calculations show that $\ti{R}_{1\ov{1}1}^{\ \ \ 2}=0$, the terms
 $$\ti{R}_{2\ov{2}2}^{\ \ \ 2}, \ti{R}_{2\ov{2}2}^{\ \ \ 1}, \ti{R}_{1\ov{2}2}^{\ \ \ 1}, \ti{R}_{2\ov{1}2}^{\ \ \ 1}, \ti{R}_{2\ov{2}1}^{\ \ \ 1}$$  are of order $O(1)$ or better, while all other terms are of order $O(e^{-t})$ or better.
It follows  that $|\widetilde{\mathrm{Rm}}|_{\ti{g}}\leq C.$
This proves (ii).

As for (iii), this is the longest part of the calculation. Direct computations show that the components of the tensor $\ti{\nabla}_{\ov{p}} \ti{\nabla}_{\ov{q}} \ti{T}_{ij}^k$ with $k=2$ all vanish, while the components with $k=1$ are $O(e^{-t})$, and this gives $|\ov{\ti{\nabla}}\ov{ \ti{\nabla}} \tilde{T}|_{\ti{g}}\leq C$.   For $\tilde{\nabla} \ov{\tilde{\nabla}} \tilde{T}$, the  term  $\ti{\nabla}_{1} \ti{\nabla}_{\ov{1}} \ti{T}_{12}^2$ is $O(e^{-2t})$ and all other components are  $O(e^{-t})$ or better.
This gives $|\ti{\nabla}\ov{ \ti{\nabla}} \tilde{T}|_{\ti{g}}\leq C$.


For the derivative of curvature, we note that the terms
$$\ti{\nabla}_{\ov{2}} \ti{R}_{2\ov{2}2}^{\ \ \ 2}, \ti{\nabla}_{\ov{2}} \ti{R}_{2\ov{2}2}^{\ \ \ 1},   \ti{\nabla}_{\ov{1}} \ti{R}_{2\ov{2}2}^{\ \ \ 1}, \ti{\nabla}_{\ov{2}} \ti{R}_{1\ov{2}2}^{\ \ \ 1}, \ti{\nabla}_{\ov{2}} \ti{R}_{2\ov{1}2}^{\ \ \ 1}, \ti{\nabla}_{\ov{2}} \ti{R}_{2\ov{2}1}^{\ \ \ 1}$$
are $O(1)$ or better.  The terms
$$\ti{\nabla}_{\ov{1}} \ti{R}_{1\ov{1}1}^{\ \ \ 1}, \ti{\nabla}_{\ov{1}} \ti{R}_{1\ov{1}1}^{\ \ \ 2}, \ti{\nabla}_{\ov{2}} \ti{R}_{1\ov{1}1}^{\ \ \ 2}, \ti{\nabla}_{\ov{1}} \ti{R}_{2\ov{1}1}^{\ \ \ 2},  \ti{\nabla}_{\ov{1}} \ti{R}_{1\ov{2}1}^{\ \ \ 2},  \ti{\nabla}_{\ov{1}} \ti{R}_{1\ov{1}2}^{\ \ \ 2} $$
are  $O(e^{-2t})$ and all other components of the tensor $\ti{\nabla}_{\ov{p}} \ti{R}_{i\ov{j}k}^{\ \ \ \ell}$ are $O(e^{-t})$ or better.
 This finally gives $|\ti{\nabla}\widetilde{\mathrm{Rm}}|_{\ti{g}}\leq C$.
\end{proof}

We now wish to use these bounds, as in Proposition \ref{cal}, to obtain the estimate
\begin{equation}\label{c12}
|\ti{\nabla} g |_{\ti{g}}\leq C.
\end{equation}
A new complication that arises in this case is that the Christoffel symbols $\tilde{\Gamma}$ now \emph{do} depend on $t$.  From
$$\ddt{} \tilde{\Gamma}^i_{jk} = \tilde{g}^{i\ov{q}} \tilde{\nabla}_j \alpha'_{k\ov{q}},$$
and comparing to the computation in \cite{ShW}, we observe that there is one new term in the evolution of the quantity $S=|\ti{\nabla} g |_{g}^2$ of the form
\begin{equation}\label{newguy}
-2\mathrm{Re}\left(g_{i\ov{r}} g^{j\ov{u}}g^{k\ov{v}}\ti{g}^{i\ov{q}}\ti{\nabla}_j \alpha'_{k\ov{q}} \ov{\Psi^r_{uv}}\right),
\end{equation}
where, as in \cite{ShW}, $\Psi^r_{uv} = \Gamma^r_{uv} - \tilde{\Gamma}^r_{uv}$.

We claim that
\begin{equation}\label{alpha}
|\ti{\nabla}\alpha'|_{\ti{g}}\leq C.
\end{equation}
Indeed,  the only nonzero component of $\alpha'$ is $\alpha'_{2\ov{2}}=\frac{1}{2y_2^2}$, and so the only nonzero components of $\ti{\nabla}\alpha'$ are
\[\begin{split}
&\ti{\nabla}_1 \alpha'_{1\ov{2}}=-\ti{\Gamma}_{11}^2\alpha'_{2\ov{2}}=O(e^{-t}),\quad
\ti{\nabla}_1 \alpha'_{2\ov{2}}=-\ti{\Gamma}_{12}^2\alpha'_{2\ov{2}}=O(e^{-t}),\\
&\ti{\nabla}_2 \alpha'_{1\ov{2}}=-\ti{\Gamma}_{21}^2\alpha'_{2\ov{2}}=O(e^{-t}),\quad \ti{\nabla}_2 \alpha'_{2\ov{2}}=\de_{2}\alpha'_{2\ov{2}}-\ti{\Gamma}_{22}^2\alpha'_{2\ov{2}}=O(1),
\end{split}\]
thanks to \eqref{christ2}, and \eqref{alpha} now follows.  This shows that the new term (\ref{newguy}) is of the order $O(\sqrt{S})$ and hence is a harmless contribution.

Next, we would like to use \eqref{c12} to prove \eqref{c1new}, but again we have to deal with the fact that, unlike the case of $S_M$ with the Tricerri metric, the Christoffel symbols $\ti{\Gamma}$ of $\ti{g}$ now depend on $t$. However, we have the key relation \eqref{c13}.
\begin{proof}[Proof of Theorem \ref{mainprop} for the Inoue surfaces $S^+$ and $S^-$]
In the case of $S^+$, we use \eqref{c12}, \eqref{c13} and the fact that $\ti{g}\leq Cg_{\mathrm{V}}$ to obtain
$$|\nabla_{\mathrm{V}} g |_{g_{\mathrm{V}}}\leq |\ti{\nabla} g |_{g_{\mathrm{V}}}+C\leq C|\ti{\nabla} g |_{\ti{g}}+C\leq C,$$
as required. The case of $S^-$ follows by passing to a double cover.
\end{proof}

\section{Further Conjectures}\label{sectionconj}

We discuss in this section a number of conjectures and open problems concerning the Chern-Ricci flow.

\bigskip
\noindent
{\bf 1.} \ A natural conjecture, discussed in the introduction, is that the convergence result of Theorem \ref{maintheorem1} holds for \emph{all} initial Hermitian metrics $\omega_0$.  In fact, this would follow from Theorem \ref{maintheorem1} if one could show that every Hermitian metric on an Inoue surface belongs to the $\partial\ov{\partial}$-class of a Hermitian metric which is strongly flat along the leaves, a statement which can be reduced to solving a partial differential equation along the leaves.  A weaker conjecture, but still interesting in light of the results of \cite{TW, TW2, TWY}, would be that the convergence result of Theorem \ref{maintheorem1} holds for all \emph{Gauduchon} $\omega_0$.

\bigskip
\noindent
{\bf 2.} \ Another obvious conjecture, also mentioned in the introduction, is that the convergence  of $\omega(t)$ to $\omega_{\infty}$ in Theorem \ref{maintheorem1} holds in the $C^{\infty}$ topology.  A starting point would be to prove it in the more restrictive setting of Theorem \ref{mainprop}.

\bigskip
\noindent
{\bf 3.} \ There are higher-dimensional analogues of Inoue surfaces, constructed by Oeljeklaus-Toma \cite{OT}, and it is natural to conjecture that similar behavior occurs.  Similarly,  there are non-K\"ahler higher dimensional torus bundles over Riemann surfaces (see \cite[Example 3.4]{To}), and one would expect that at least some of the results of \cite{TWY} on elliptic bundles should generalize to these.

\bigskip
\noindent
{\bf 4.} \ It would be interesting to try to prove similar collapsing results on Hopf surfaces, the other family of Class VII surfaces with vanishing second Betti number.  In \cite{TW,TW2}, explicit solutions were given of the unnormalized Chern-Ricci flow $\ddt{} \omega = - \textrm{Ric}(\omega)$ on Hopf surfaces of the type $(\mathbb{C}^2 \setminus \{0 \})/ \sim$, where $(z_1, z_2) \sim (\alpha z_1, \beta z_2)$, for complex $\alpha, \beta$ with $|\alpha|=|\beta| \neq 1$.  In the Gromov-Hausdorff sense, the metrics collapse to a circle in finite time \cite{TW2}. Moreover, a uniform upper bound on the evolving metric was given in \cite{TW} starting at a metric in the $\partial \ov{\partial}$-class of the ``standard'' Hopf metric.  It would be desirable to understand the limiting behavior of the flow  starting at \emph{any} Hermitian metric, on \emph{any} Hopf surface.  In the case of Hopf surfaces where $|\alpha|\neq|\beta|$ or where $\sim$ is not of the type just described, no explicit solutions to the Chern-Ricci flow have even been constructed (see the discussion in \cite[Section 8]{TW}).

\bigskip
\noindent
{\bf 5.} \ Perhaps surprisingly, the behavior of the Chern-Ricci flow starting at a non-K\"ahler metric on the complex projective plane $\mathbb{CP}^2$ remains a mystery.  One can easily compute that the volume of the metric tends to zero in a finite time $T$, and the volume shrinks like $(T-t)$, and this suggests that the flow should collapse to a Riemann surface. This would be in stark contrast with what happens when starting at a K\"ahler metric on $\mathbb{CP}^2$, where the volume goes to zero like $(T-t)^2$ and the flow collapses in finite time to a point, thanks to Perelman's diameter bound (cf. \cite{SeT}).

\bigskip
\noindent
{\bf 6.} \ An optimistic conjecture would be that on any minimal Class VII surface with $b_2>0$, or on any Hopf surface, the solution $\omega(t)$ of the Chern-Ricci flow (which exists for a finite time $T$ and has volume tending to zero as $t \rightarrow T$  \cite{TW}) should converge in $C^{\infty}$ to a non-closed nonnegative $(1,1)$ form $\alpha$.  Moreover, taking the tangent distribution given by the kernel of $\alpha$ and taking iterated Lie brackets of it, one should obtain an integrable three-dimensional distribution.  Even more optimistically, one might hope that a leaf of this distribution be a sphere in a global spherical shell \cite{Ka}, which is conjectured to exist \cite{Na}. This behavior is exactly what happens for the explicit solutions constructed in \cite{TW2} on the standard Hopf surfaces as in item 4. above.


\begin{thebibliography}{99}
\bibitem{Bo} Bogomolov, F.A. {\em Surfaces of class ${\rm VII}\sb{0}$ and affine geometry}, Izv. Akad. Nauk SSSR Ser. Mat. {\bf 46} (1982), no. 4, 710--761.
\bibitem{Br} Brunella, M. {\em Subharmonic variation of the leafwise Poincar\'e metric}, Invent. Math. {\bf 152} (2003), no. 1, 119--148.
\bibitem{Br2}  Brunella, M. {\em A characterization of Inoue surfaces}, Comment. Math. Helv. {\bf 88} (2013), no. 4, 859--874.
\bibitem{Ca} Candel, A. {\em Uniformization of surface laminations}, Ann. Sci. \'Ecole Norm. Sup. (4) {\bf 26} (1993), no. 4, 489--516.
\bibitem{CY} Cheng, S.Y., Yau, S.-T. {\em Differential equations on Riemannian manifolds and their geometric applications}, Comm. Pure Appl. Math. {\bf 28} (1975), no. 3, 333--354.
\bibitem{FZ} Fong, F.T.-H., Zhang, Z. {\em The collapsing rate of the K\"ahler-Ricci flow with regular infinite time singularity}, arXiv:1202.3199, to appear in J. Reine Angew. Math.
\bibitem{Gh} Ghys, \'E. {\em Sur l'uniformisation des laminations paraboliques}, in {\em Integrable systems and foliations (Montpellier, 1995)}, 73--91, Progr. Math., 145, Birkh\"auser, Boston, MA, 1997.
\bibitem{G} Gill, M. {\em Convergence of the parabolic complex Monge-Amp\`ere equation on compact Hermitian manifolds}, Comm. Anal. Geom. {\bf 19} (2011), no. 2, 277--303.
\bibitem{G2} Gill, M. {\em Collapsing of products along the K\"ahler-Ricci flow}, Trans. Amer. Math. Soc. {\bf 366} (2014), no. 7, 3907--3924.
\bibitem{G3} Gill, M. {\em The Chern-Ricci flow on smooth minimal models of general type}, arXiv:1307.0066.
\bibitem{GS} Gill, M., Smith, D.J. {\em The behavior of Chern scalar curvature under Chern-Ricci flow}, arXiv:1311.6534.
\bibitem{In} Inoue, M. {\em On surfaces of Class $VII_0$}, Invent. Math. {\bf 24} (1974), 269--310.
\bibitem{Ka}  Kato, Ma. {\em Compact complex manifolds containing ``global'' spherical shells. I}, in {\em Proceedings of the International Symposium on Algebraic Geometry (Kyoto Univ., Kyoto, 1977)}, 45-–84, Kinokuniya Book Store, Tokyo, 1978.
\bibitem{Ko} Kodaira, K. {\em On the structure of compact complex analytic surfaces, II}, Amer. J. Math. {\bf 88} (1966), no. 3, 682--721.
\bibitem{L} Lauret, J. {\em Curvature flows for almost-hermitian Lie groups}, arXiv:1306.5931, to appear in Trans. Amer. Math. Soc.
\bibitem{LR} Lauret, J., Rodr\'iguez Valencia, E.A. {\em On the Chern-Ricci flow and its solitons for Lie groups}, arXiv:1311.0832, to appear in Math. Nachr.
\bibitem{LYZ} Li, J., Yau, S.-T., Zheng, F. {\em On projectively flat Hermitian manifolds}, Comm. Anal. Geom. {\bf 2} (1994), 103--109.
\bibitem{Na} Nakamura, I. {\em On surfaces of class $\rm VII\sb 0$ with curves}, Invent. Math. {\bf 78} (1984), no. 3, 393--443.
\bibitem{Ni} Nie, X. {\em Regularity of a complex Monge-Amp\`ere equation on Hermitian manifolds}, Comm. Anal. Geom. {\bf 22} (2014), no. 5, 833--856.
\bibitem{OT} Oeljeklaus, K., Toma, M. {\em Non-K\"ahler compact complex manifolds associated to number fields}, Ann. Inst. Fourier (Grenoble) {\bf 55} (2005), no. 1, 161--171.
\bibitem{PSS} Phong, D.H., Sesum, N., Sturm, J. {\em Multiplier ideal sheaves and the K\"ahler-Ricci flow}, Comm. Anal. Geom. {\bf 15} (2007), no. 3, 613--632.
\bibitem{PS} Phong, D.H., Sturm, J. {\em The Dirichlet problem for degenerate complex Monge-Ampere equations}, Comm. Anal. Geom. {\bf 18} (2010), no. 1, 145--170.
\bibitem{SeT} Sesum, N., Tian, G. {\em Bounding scalar curvature and diameter along the K\"ahler Ricci flow (after Perelman)}, J. Inst. Math. Jussieu {\bf 7} (2008), no. 3, 575--587.
\bibitem{ShW} Sherman, M., Weinkove, B. {\em Local Calabi and curvature estimates for the Chern-Ricci flow}, New York J. Math. {\bf 19} (2013), 565--582.
\bibitem{ST} Song, J., Tian, G. {\em The K\"ahler-Ricci flow on surfaces of positive Kodaira dimension}, Invent. Math. {\bf 170} (2007), no. 3, 609--653.
\bibitem{ST2} Song, J., Tian, G. {\em Canonical measures and K\"ahler-Ricci flow}, J. Amer. Math. Soc. {\bf 25} (2012), no. 2, 303--353.
\bibitem{ST3} Song, J., Tian, G. {\em Bounding scalar curvature for global solutions of the K\"ahler-Ricci flow}, arXiv:1111.5681.
\bibitem{SW} Song, J., Weinkove, B. {\em An introduction to the K\"ahler-Ricci flow}, in {\em An introduction to the K\"ahler-Ricci flow}, 89--188, Lecture Notes in Math. {\bf 2086}, Springer, Cham., 2013.
\bibitem{STi} Streets, J., Tian, G. {\em A parabolic flow of pluriclosed metrics}, Int. Math. Res. Not. {\bf 2010} (2010), no. 16, 3103--3133
\bibitem{T0} Teleman, A. {\em Projectively flat surfaces and Bogomolov's theorem on class $VII_{0}$-surfaces},  Int. J. Math. {\bf 5} (1994), 253--264.
\bibitem{To} Tosatti, V. {\em Non-K\"ahler Calabi-Yau manifolds}, arXiv:1401.4797, to appear in Contemp. Math.
\bibitem{TW} Tosatti, V., Weinkove, B. \emph{On the evolution of a Hermitian metric by its Chern-Ricci form}, J. Differential Geom. {\bf 99} (2015), no.1, 125--163.
\bibitem{TW2} Tosatti, V., Weinkove, B. {\em The Chern-Ricci flow on complex surfaces},  Compos. Math. {\bf 149} (2013), no. 12, 2101--2138.
\bibitem{TWY} Tosatti, V., Weinkove, B., Yang, X. {\em Collapsing of the Chern-Ricci flow on elliptic surfaces}, arXiv:1302.6545, to appear in Math. Ann.
\bibitem{TWY2} Tosatti, V., Weinkove, B., Yang, X. {\em The K\"ahler-Ricci flow, Ricci-flat metrics and collapsing limits}, arXiv:1408.0161.
\bibitem{Tr} Tricerri, F. {\em Some examples of locally conformal K\"ahler manifolds}, Rend. Sem. Mat. Univ. Politec. Torino {\bf 40} (1982), no. 1, 81--92.
\bibitem{Va} Vaisman, I. {\em Non-K\"ahler metrics on geometric complex surfaces}, Rend. Sem. Mat. Univ. Politec. Torino {\bf 45} (1987), no. 3, 117--123.
\bibitem{Wa} Wall, C.T.C. {\em Geometric structures on compact complex analytic surfaces}, Topology {\bf 25} (1986), no. 2, 119--153.
\bibitem{Ya}Yau, S.-T. {\em A general Schwarz lemma for K\"ahler manifolds}, Amer. J. Math. {\bf 100} (1978), no. 1, 197--203.

\end{thebibliography}
\end{document}